\newcommand{\Q}{\mathbb{Q}}
\newcommand{\F}{\mathbb{F}}
\newcommand{\Z}{\mathbb{Z}}
\def\a{{\mathfrak a}}
\def\b{{\mathfrak b}}
\def\p{{\mathfrak p}}
\def\q{{\mathfrak q}}
\def\l{{\mathfrak l}}
\def\qq{{\mathfrak Q}}
\def\K{{\tilde K}}
\def\FF{{\tilde F}}
\def\O{{\mathcal O}}
\def\I{{\mathcal I}}
\def\U{{\mathcal U}}
\def\J{{\mathcal J}}
	\newtheorem{thm}{Theorem}[section]
	\newtheorem{lem}[thm]{Lemma}
	\newtheorem{propo}[thm]{Proposition}
	\newtheorem{cor}[thm]{Corollary}
\def\Ell{\mathfrak L}
\newtheorem{Theorem}{Theorem}
	\newtheorem{defn}[thm]{Definition}
	\newtheorem{rem}[thm]{Remark}
\newtheorem{Corollary}[Theorem]{Corollary}
\newtheorem{Remarque}{Remark}
\def\H{{\mathcal C}}
\title{On  $S$-split $p$-Hilbert class field towers with prescribed Galois groups}
\date{\today}
\author{Christian Maire}
 \address{Université Marie et Louis Pasteur,  CNRS, Institut FEMTO-ST, F-25000 Besançon, France}
\email{christian.maire@univ-fcomte.fr}
\author{Karim Sankara}
 \address{Nazi Boni University, Bobo-Dioulasso, Burkina Faso \and Université Marie et Louis Pasteur,  CNRS, Institut FEMTO-ST, F-25000 Besançon, France}
\email{karim.sankara@femto-st.fr}
\date{\today}
\subjclass{11R37, 11R32, 11R29}
\keywords{Splitting in $p$-Hilbert class field towers; finite $p$-groups}
\thanks{The authors are very grateful to  Ravi Ramakrishna for his interest in our work and helpful comments. They also thank  the International Mathematical Union, Commission for Developing Countries (IMU CDC) and the GRAID program for their support. This work has been supported by the EIPHI Graduate School (contract "ANR-17-EURE-0002") and by the Bourgogne-Franche-Comté Region. The authors would also like to thank the anonymous referee for several helpful suggestions that improved the article.}
\begin{document}
	
\maketitle

\begin{center}
\emph{Dedicated to the memory of our friend and colleague  Tony Ezome (1979--2024).}
\end{center}

\

\

\begin{abstract}
In this work, we show that given a finite $p$-group $G$, a number field $K$ having a trivial $p$-class group $Cl_K$, and a finite set of primes $S$ of $K$, there exists a finite extension $F/K$ such that the  $S$-split $p$-Hilbert class field tower $L_p^S(F)$ of $F$ has $G$ as its Galois group.
 This extends results by Ozaki and Hajir-Maire-Ramakrishna.
\end{abstract}

\section*{Introduction}

Let $p$ be a prime number and $K$ be a number field. Denote by $L_p(K)$ the top of the $p$-Hilbert class field tower of $K$, which is the maximal unramified $p$-extension of $K$. The extension $L_p(K)/K$ can also be constructed by iteratively stacking the $p$-Hilbert class fields $K^{(i)}$: here, $K^{(0)} = K$, and $K^{(i+1)}$ is the $p$-Hilbert class field  of $K^{(i)}$, {\it i.e.}, the maximal unramified abelian $p$-extension of $K^{(i)}$. Recall that the Artin map provides an isomorphism between the $p$-Sylow subgroup $Cl_{K^{(i)}}$ of the class group of $K^{(i)}$ and $Gal(K^{(i+1)}/K^{(i)})$. Let~$G_K := Gal(L_p(K)/K)$.

Observe that $L_p(K) = K$ if and only if the $p$-part $Cl_K$ of the class group of $K$ is trivial (which is the case, for example, when $K = \Q$). On the other hand, the Golod–Shafarevich criterion shows that the $p$-extension $L_p(K)/K$ can be infinite (see \cite{GS},  \cite[\S 7.7]{Koch} or \cite{R}).
To the best of our knowledge, using this criterion is the only method available to identify infinite $p$-towers. This naturally leads to the following question:

{\it Does every finite $p$-group $G$ arise as the Galois group of the $p$-Hilbert class field tower of some number field $K$?}

This question can be viewed as an inverse Galois problem for the $p$-Hilbert tower.

Ozaki answered this question affirmatively in \cite{O}. This result was revisited and further extended in \cite{HMR}. It is this version that forms the basis of our approach.

\medskip

Here, we focus on the inverse Galois problem for the $p$-Hilbert class field towers with decomposition. Let us clarify the context.

Let $S$ be a set of primes of $K$.
Denote by $L_p^S(K)$ the maximal $p$-extension of $K$ that is unramified everywhere in which every prime in $S$ splits completely. Let $G_K^S := Gal(L_p^S(K)/K)$. The extension $L_p^S(K)/K$ is also the largest Galois subextension of $L_p(K)/K$ fixed by the decomposition groups of the primes above  $S$.  In particular, $G_K^S$ is a quotient of $G_K$.

The groups $G_K^S$ have been the subject of extensive study; see, for example, \cite[Chapter 11]{Koch}, \cite[Chapter III]{Gras}, \cite[Chapter X]{NSW}, \cite{M}, etc.

\medskip

Before stating the main result of this note, let us introduce some notations.

\smallskip

For a finite $p$-group $G$, let $h^i_G = \dim H^i(G, \mathbb{Z}/p)$, and let $p^{e_G}$ denote the exponent of~$G$.
For a number field $K$, denote its signature by $(r_{K,1}, r_{K,2})$.

\begin{Theorem} \label{main_theorem} Let $K$  be a number field with a finite $p$-Hilbert tower $L_p(K)/K$; set $G:=G_K =Gal(L_p(K)/K)$.
Assume that  $r_{K,1}+r_{K,2}\geq h^1_G + h^{2}_G$.

Let  $S$ be a finite set of primes of  $K$.

Then there exists a tamely ramified   extension $F/K$ of degree $p^m$ such that
\begin{itemize} \item[$(i)$]  $L_{p}(F) = L^S_{p}(F)$;
 \item[$(ii)$]  the Galois group $Gal(L^{S}_{p}(F)/F)$ is isomorphic to  $G$;
 \item[$(iii)$] the extension $F/K$ is ramified at  $m$ primes;
 \item[$(iv)$] $m\leq e_G$.
\end{itemize}
\end{Theorem}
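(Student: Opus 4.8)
The plan is to enlarge $K$ to a field $F$ in which every prime of $S$ becomes inert to a high enough degree, so that, once the Hilbert tower group is shown to be unchanged, the primes above $S$ are forced to split completely in $L_p(F)/F$. First I would record the splitting criterion. The extension $F/K$ will be ramified only at auxiliary finite primes chosen disjoint from $S$, so each $\mathfrak{p}\in S$ is unramified in $F/K$. Since $L_p(K)/K$ is unramified we have $L_p(K)F\subseteq L_p(F)$; if moreover $F\cap L_p(K)=K$, then $Gal(L_p(K)F/F)\cong G$ and the restriction $G_F\twoheadrightarrow G$ is an isomorphism precisely when $L_p(F)=L_p(K)F$. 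For $\mathfrak{P}\mid\mathfrak{p}$ in $F$, the Frobenius of $\mathfrak{P}$ in $Gal(L_p(K)F/F)\cong G$ is $\sigma_\mathfrak{p}^{\,f}$, where $\sigma_\mathfrak{p}:=\mathrm{Frob}_{L_p(K)/K}(\mathfrak{p})$ and $f=f(\mathfrak{P}/\mathfrak{p})$ is the residue degree in $F/K$. Writing $p^{e_\mathfrak{p}}$ for the order of $\sigma_\mathfrak{p}$ in $G$ (so $e_\mathfrak{p}\le e_G$), the prime $\mathfrak{p}$ splits completely as soon as $p^{e_\mathfrak{p}}\mid f$. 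Hence it suffices to build $F/K$ with $F\cap L_p(K)=K$, with $G_F\cong G$ (which gives (i)--(ii)), and with each $\mathfrak{p}\in S$ of residue degree divisible by $p^{e_\mathfrak{p}}$.

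Set $m:=\max_{\mathfrak{p}\in S}e_\mathfrak{p}$, so $m\le e_G$, yielding (iv). I would then construct $F/K$ as a cyclic degree-$p^m$ extension in which every $\mathfrak{p}\in S$ is inert, built as a tower $K=F_0\subset F_1\subset\cdots\subset F_m=F$ with $F_i/K$ cyclic of degree $p^i$ and $F_i/F_{i-1}$ tamely ramified, unramified at the archimedean places, at exactly one new prime $\mathfrak{q}_i$ (disjoint from $S$, with $N\mathfrak{q}_i\equiv 1\bmod p$). Solving the successive embedding problems $\Z/p^{i-1}\hookrightarrow\Z/p^i$ is what forces the introduction of one new ramified prime per level, giving degree $p^m$ ramified at exactly $m$ primes, that is, (iii). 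Keeping $S$ inert at every level makes $\mathrm{Frob}_{F/K}(\mathfrak{p})$ a generator of $Gal(F/K)\cong\Z/p^m$, so $f(\mathfrak{P}/\mathfrak{p})=p^m\ge p^{e_\mathfrak{p}}$ and the criterion of the first paragraph is met. Moreover $F\cap L_p(K)=K$ is automatic: since $F/K$ is cyclic, every nontrivial subextension contains $F_1$ and is therefore ramified at $\mathfrak{q}_1$, whereas $L_p(K)/K$ is unramified.

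The selection of each $\mathfrak{q}_i$ is governed by Chebotarev's theorem applied over a governing field containing $L_p(F_{i-1})$, the relevant roots of unity, and the data cutting out $F_{i-1}$. Two families of conditions must hold at once: (a) $\mathfrak{q}_i$ must be an admissible ramifying prime for which $G_{F_i}\cong G$, i.e. the new ramification is absorbed and produces no new unramified $p$-extension of $F_i$; and (b) each $\mathfrak{p}\in S$ stays inert in $F_i/F_{i-1}$. Condition (a) is the HMR-type preservation step, and it is exactly where the hypothesis $r_{K,1}+r_{K,2}\ge h^1_G+h^2_G$ enters: through the Shafarevich relation-rank inequality it supplies the slack needed to realize $G$ and nothing larger after a tame ramification. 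Since a tame cyclic degree-$p$ step, unramified at infinity, multiplies $r_{\bullet,1}+r_{\bullet,2}$ by $p$ (real places split in odd degree, complex places always split), the hypothesis propagates, indeed improves, up the tower, so (a) remains available at each of the $m$ levels.

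The main obstacle is the compatibility of (a) and (b): I must show that imposing inertness of the finitely many primes of $S$ does not destroy the positive density of primes $\mathfrak{q}_i$ realizing the tower-preserving condition. I would handle this by checking that the Frobenius conditions defining (b) are imposed on a part of the governing field linearly disjoint from the part controlling (a), so that Chebotarev produces a positive density of primes $\mathfrak{q}_i$ meeting both; the finiteness of $S$ and the freedom in enlarging the governing field reduce this to a linear-disjointness verification rather than a genuinely new difficulty. Assembling the levels then gives $L_p(F)=L_p(K)F$, hence $Gal(L_p^S(F)/F)=Gal(L_p(F)/F)\cong G$, together with the degree, ramification, and exponent bounds, establishing (i)--(iv).
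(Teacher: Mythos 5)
Your overall architecture (iterate tame $\Z/p$-type steps chosen by Chebotarev in governing fields, preserve the tower via the HMR stability mechanism, keep $S$ inert) matches the paper, but your splitting criterion forces a construction that does not go through. You arrange for $\p\in S$ to split in $L_p(K)F/F$ by making the residue degree $f(\P/\p)$ in $F/K$ divisible by $p^{e_\p}$, which obliges you to take $F/K$ \emph{cyclic} of degree $p^m$ (in a non-cyclic $p$-extension the decomposition group of an unramified prime is cyclic, hence cannot have order $p^m$ unless the quotient it generates is cyclic). This cyclicity is where the gap is. In a tamely ramified cyclic $\Z/p^m$-extension in which each level $F_i/F_{i-1}$ introduces exactly one new ramified prime $\q_i$, the inertia subgroup of $\q_i$ is a subgroup of $Gal(F/F_{i-1})\cong\Z/p^{m-i+1}$ surjecting onto $Gal(F_i/F_{i-1})$, hence equals all of $Gal(F/F_{i-1})$; in particular $\q_1$ is \emph{totally ramified} in $F/K$, and for $m\geq 2$ every step $F_i/F_{i-1}$ is ramified at the primes above $\q_1,\dots,\q_i$, not just at $\q_i$. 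The stability theorem you invoke (Theorem \ref{theorem_stabilite}) applies only to a $\Z/p$-extension exactly ramified at \emph{one} tame prime whose Frobenius equals the specific element $z\in I_{G_K}(M_{L_p(K)})$; it does not cover these multiply ramified steps, and you would additionally need $p^m\mid N\q_1-1$ and solvability of the embedding problems $\Z/p^{i-1}\hookrightarrow\Z/p^i$ compatibly with the Frobenius condition $\sigma_{\q}=z$ and with inertness of $S$ --- a compatibility you assert via ``linear disjointness'' but do not verify, and which is not a routine enlargement of the governing field.

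The paper avoids all of this by dropping cyclicity: it suffices to take a tower of independent $\Z/p$-extensions $N_i/N_{i-1}$, each exactly ramified at one new tame prime, each stable ($L_p(N_i)=N_iL_p(N_{i-1})$, Proposition \ref{prop_stab_inerte}) and each inert at the surviving primes of $S$. The splitting then comes not from the residue degree in $F/K$ but from the observation that the decomposition group of $\l$ in $Gal(N_iL_p(N_{i-1})/N_{i-1})\hookrightarrow Gal(N_i/N_{i-1})\times Gal(L_p(N_{i-1})/N_{i-1})$ is cyclic and surjects onto the first factor, so its order is $\mathrm{lcm}(p,p^{e_\l})=p^{e_\l}$ and the residue degree of $\l$ in $L_p(N_i)/N_i$ drops to $p^{e_\l-1}$ at every step. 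After $m=\max_\l e_\l\leq e_G$ steps all primes of $S$ split completely, with no embedding problem and with each step within the scope of the one-prime stability theorem. Your first paragraph (the Frobenius computation) and your bookkeeping of $A_K$ along the tower are correct; the construction of $F$ in your second and third paragraphs needs to be replaced by this non-cyclic iteration.
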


Here, we make a slight abuse of notation by still denoting by $S$ the set $S_F$​ of primes of~$F$ lying above the primes in the original set $S$.

In Theorem \ref{main_theorem}, the proof shows that $\# S_F=\#S$.

\begin{Remarque} The choice of $F$ depends on $S$. However,
we can observe that the estimates on the degree of $F/K$ and on the number of primes ramified in $F/K$ do not depend on $S$.
\end{Remarque}

The  technical condition $r_{K,1}+r_{K,2}\geq h^1_G + h^{2}_G$ ensures the presence of a sufficient number of Minkowski units associated with $K$ (see \S \ref{section_MU} for the definition).
When $\zeta_p \notin K$, the condition on the signature of $K$ can be refined (see Theorem \ref{main_theorem_0}).
This technical condition is satisfied in the main result of \cite{HMR}, allowing us to deduce the following corollary.
\begin{Corollary} \label{coroB}
Let $K$ be a number field with a trivial $p$-class group $Cl_K$​, and let~$S$ be a finite set of primes of $K$. Let $G$ be a finite $p$-group. Then there exists an extension $F/K$, tamely ramified and unramified at infinity, such that the Galois group $Gal(L^S_p(F)/F)$ is isomorphic to $G$.
\end{Corollary}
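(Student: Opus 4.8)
The plan is to bootstrap from the inverse Galois realization of Hajir--Maire--Ramakrishna and then feed the resulting field into Theorem \ref{main_theorem} in its refined form (Theorem \ref{main_theorem_0}). The essential point is that the base field $K$ \emph{cannot} be used directly in the main theorem: since $Cl_K$ is trivial we have $L_p(K)=K$ and hence $G_K=1$, whereas the main theorem demands a field whose $p$-Hilbert tower group is already $G$. So I would first produce an auxiliary field over $K$ realizing $G$, and only then add the $S$-splitting.

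\emph{Step 1 (realizing $G$).} Apply the main result of \cite{HMR} to the pair $(K,G)$. Because $Cl_K$ is trivial, that result yields a finite extension $E/K$ which is tamely ramified and unramified at infinity and satisfies $Gal(L_p(E)/E)\cong G$. The key extra feature I would extract from the HMR construction is control of the signature $(r_{E,1},r_{E,2})$: the field $E$ can be arranged so that the refined signature inequality required by Theorem \ref{main_theorem_0} holds. This is exactly the assertion flagged in the paragraph preceding the corollary, namely that when $\zeta_p\notin K$ the condition amounts to having enough Minkowski units, a property enjoyed by the fields produced in \cite{HMR}.

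\emph{Step 2 (adding the splitting).} Let $S_E$ denote the finite set of primes of $E$ lying above $S$. Since $G_E:=Gal(L_p(E)/E)\cong G$ is finite and $E$ satisfies the signature hypothesis, Theorem \ref{main_theorem_0} (the refinement of Theorem \ref{main_theorem}) applies to $(E,S_E)$ and produces a tamely ramified extension $F/E$ with $L_p(F)=L_p^{S_E}(F)$ and $Gal(L_p^{S_E}(F)/F)\cong G_E\cong G$. Assembling the tower $K\subseteq E\subseteq F$ then finishes the argument: the composite $F/K$ is tamely ramified and unramified at infinity, since each of $E/K$ and $F/E$ is (tameness and unramifiedness at the archimedean places are preserved under composition). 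Moreover the primes of $F$ above $S$ coincide with those above $S_E$, so $L_p^S(F)=L_p^{S_E}(F)$ and $Gal(L_p^S(F)/F)\cong G$, as required.

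I expect the only genuinely delicate point to be Step 1, specifically the verification that the field $E$ furnished by \cite{HMR} meets the signature hypothesis of Theorem \ref{main_theorem_0}; this is where one must cite the precise form of the HMR construction and its control on the number of real and complex places (equivalently, on the supply of Minkowski units). By contrast, Step 2 is a direct invocation of the theorem already proved in this note, and the tower-assembly in the conclusion is routine, since the stability of tameness, of unramifiedness at infinity, and of the transported set $S$ under composition of extensions requires no new input.
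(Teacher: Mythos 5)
Your proposal is correct and follows essentially the same route as the paper: first invoke the main theorem of \cite{HMR} to produce an auxiliary field realizing $G$ as its $p$-tower group, then verify that this field satisfies the hypothesis of Theorem \ref{main_theorem_0} (which, as you note, amounts to a sufficient supply of Minkowski units, i.e.\ $A_{\K}\geq h^1_G$ hence $\lambda_{\K}\geq h^1_G$), and finally apply the main theorem to that field together with the lifted set of primes. The paper's own proof is exactly this two-step bootstrap, so no further comparison is needed.
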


\begin{Remarque} The previous results establish the existence of an extension $F/K$ with very specific properties. This extension is constructed by successively applying Chebotarev's theorem, which in fact ensures the existence of infinitely many extensions satisfying the mentioned properties.
\end{Remarque}

The proof of the main result relies critically on the presence of Minkowski units along the $p$-Hilbert tower $L_p(K)/K$. We then need to {\it eliminate} the residue degree at $S$ within a given $p$-tower. This is achieved by forming the compositum with extensions  $\tilde{F}/\tilde{K}$ that are inert at $S$, while maintaining the stability of the $p$-tower, a point where we use a result found in \cite{HMR}. The existence of the extensions  $\tilde{F}/\tilde{K}$ is ensured by carefully selecting Frobenius elements in appropriate governing fields.

\medskip

Our work is organized into three parts. In \S 1, we recall the necessary tools. In \S 2, we prove Theorem \ref{main_theorem}. Finally, in \S 3, we conclude with three remarks: one concerns  a certain condition $\H_S$, another concerns the degrees of the fields encountered, and the third addresses the same problem in the context of $p$-Hilbert towers with tame ramification and decomposition.

\

{\bf Preliminaries}

We fix an algebraic closure $\overline{\Q}$ of $\Q$. Let $p$ be a prime number. The element $\zeta_p \in\overline{\Q}$​  denotes a primitive $p$th root of unity.

\smallskip

$\bullet$ Let    $K$ be a number field.
\begin{itemize}
\item[$-$] A prime $\q$ of the ring of integers $\O_K$ of $K$ is called {\it tame} if $\# \O_K/\q \equiv 1  \ (mod \ p)$.
\item[$-$] A $\Z/p$-extension $F/K$ refers to a cyclic extension of degree $p$.
\item[$-$] The field $K(\zeta_p)$ is denoted by $K'$.
\end{itemize}

\smallskip

 $\bullet$  Let $S = \{\l_{1},...,\l_{s}\}$ be a finite set of  primes of  $K$.
 \begin{itemize}
\item[$-$]
 $\O_{K}^S = \{\alpha \in K\mid  v_{\p}(\alpha)\geq 0, \ \forall \p\not \in S \}$ is the ring of  $S$-integers of $K$,  where  $v_\p$ is the  $\p$-valuation (normalized) associated to the prime $\p$ of $K$.
 \item[$-$] The group $E^S:=E_{K}^S$ of $S$-units of $K$ is the group of invertible elements of  $\O_{K}^S$.
 \item[$-$]
 We recall that $E^S$ is isomorphic to  $W_K\times \Z^r  $, where $W_{K}$ is the cyclic group of the roots of unity in $K$, and  $r = r_{K,1} + r_{K,2} - 1 + s$.
 \end{itemize}

 \smallskip

$\bullet$  The $p$-Sylow subgroup of the  $S$-class group of $K$  denoted by $Cl^{S}_{K}$ is defined
 by $$Cl^{S}_{K} := \Z_p\otimes \left(I_{K}/\mathcal{P}_{K}\langle S\rangle \right),$$
 where $I_K$ is the group of fractional ideals of $K$, $\mathcal{P}_{K}$ is the subgroup of principal fractional ideals, and $\langle S \rangle$ is the subgroup of  $I_K$ generated by the prime ideals in~$S$. It is also canonically isomorphic to the class group of $\O_K^S$.
 Recall that, by class field theory, $Cl_K^S$ is canonically isomorphic to the Galois group of the maximal abelian $p$-extension $K^{S,(1)}/K$, unramified everywhere and completely split at $S$ (real archimedean places remain real).
Set $Cl_K^S[p]:=\{h \in Cl_K^S, h^p=1\}$.

\smallskip

$\bullet$ For a prime $\p$ of $K$, let $K_\p$ be the completion of $K$ at $\p$, and  $U_\p$ be the subgroup of units of $K_\p^\times$.
Let  $\J:=\J_K$ be the group of the idèles of $K$; set $U_\infty=\prod_{v|\infty} K_v^\times$,  where the products is taken over archimedean places $v$ of $K$; set $\U:=\U_K=U_\infty \prod_\p U_\p $.

Observe that $Cl_K \simeq \Z_p\otimes \left(\J/K^\times \U\right)$, and $Cl_K^S\simeq \Z_p\otimes \left(\J/K^\times \U \prod_{\l\in S} K_\l^\times \right)$.

\smallskip

$\bullet$ More generally,
let $S$ and $T$ be two finite and disjoint sets of primes of $K$. We assume that the primes in $T$ are tame.
Let $L_{p,T}^S$ denote the maximal pro-$p$ extension of $K$ which is unramified outside $T$ and in which all primes in $S$ split completely.
Observe that $L_{p,\emptyset}^S=L_p^S$ and $L_{p,\emptyset}^\emptyset=L_p(K)$. Set $G_{K,T}^S:=Gal(L_{p,T}^S/K)$.

 \smallskip

 $\bullet$
 If $G$ is a $p$-group, we denote by $\Phi(G):=G^p[G,G]$ its Frattini subgroup.
 All cohomology groups have coefficients $\Z/p$ (with trivial action) so we write  $H^i(G)$ for $H^i(G,\Z/p)$. We denote by $h^i_G$ the dimension over $\F_p$ of $H^i(G)$. Recall that $h^1_G$ is the minimal number of generators of $G$, and $h^2_G$ is the minimal number of relations.

 \smallskip

 $\bullet$ For $p=2$, the set of extensions considered in this paper are unramified at infinity.


\section{Tools for the proof}

\subsection{Governing field}

\subsubsection{Definition}

Given a finite set~$S$ of primes of $K$,  we define the multiplicative subgroup $V_{K}^S$ of~$K^{\times}$~as
$$ V_{K}^S = \{ x\in K^{\times}, (x) \in  (I_K)^{p}\langle S\rangle\}.$$
When $S=\emptyset$, we denote $V_K:=V_K^{\emptyset}$.

\begin{lem} \label{lemmeVS} The following exact sequence holds:
$$1 \longrightarrow E_{K}^S/(E_{K}^S)^p \longrightarrow V_{K}^S/(K^\times)^p \longrightarrow Cl_K^S[p] \longrightarrow 1 \cdot$$
\end{lem}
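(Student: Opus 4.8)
The plan is to make the two arrows explicit and then verify exactness one spot at a time; the statement is a variant of the classical Kummer-type sequence computing the $p$-torsion of a class group, here with an extra $S$-component allowed in the ideals. The left arrow is induced by the inclusion $E_K^S\subseteq V_K^S$ (legitimate, since an $S$-unit $u$ satisfies $(u)\in\langle S\rangle\subseteq (I_K)^p\langle S\rangle$) together with $(E_K^S)^p\subseteq(K^\times)^p$. For the right arrow, given $x\in V_K^S$ I would fix a factorisation $(x)=\a^p\mathfrak{s}$ with $\a\in I_K$ and $\mathfrak{s}\in\langle S\rangle$, and send $x$ to the ideal class $[\a]\in I_K/\mathcal{P}_K\langle S\rangle$. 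Two remarks make this land in the target: since $(x)$ is principal and $\mathfrak{s}\in\langle S\rangle$, the class $[\a]$ satisfies $[\a]^p=1$, so it is a $p$-torsion element and hence sits inside the $p$-Sylow subgroup $Cl_K^S$, indeed in $Cl_K^S[p]$; and although one must pass to $\Z_p\otimes(-)$, the $p$-torsion of a finite abelian group already lies in its $p$-primary part, so nothing is lost.

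The delicate point, and the step I expect to be the main obstacle, is the well-definedness of the right arrow, because $\a$ is \emph{not} unique. Comparing valuations in $(x)=\a^p\mathfrak{s}$, for every $\p\notin S$ one is forced to have $p\,v_\p(\a)=v_\p(x)$ (so $p\mid v_\p(x)$, which is exactly the defining condition of $V_K^S$, and $v_\p(\a)$ is thereby determined), whereas the exponents of $\a$ at the primes of $S$ are completely free, being absorbed into $\mathfrak{s}$. Thus $\a$ is well-defined only modulo $\langle S\rangle$, which is precisely what is killed in $I_K/\mathcal{P}_K\langle S\rangle$, so $[\a]$ is unaffected. Replacing $x$ by $xz^p$ changes $\a$ by $(z)\in\mathcal{P}_K$, again leaving $[\a]$ fixed; hence the arrow descends to $V_K^S/(K^\times)^p$.

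Finally I would check the remaining exactness assertions, all of which then reduce to short valuation computations. For surjectivity, an element of $Cl_K^S[p]$ is represented by an ideal $\a$ with $\a^p=(x)\mathfrak{s}$ for some $x\in K^\times$ and $\mathfrak{s}\in\langle S\rangle$; then $(x)=\a^p\mathfrak{s}^{-1}\in(I_K)^p\langle S\rangle$ shows $x\in V_K^S$, and $x$ maps to $[\a]$. For exactness in the middle: if $[\a]=1$ then $\a=(y)\mathfrak{s}'$ with $y\in K^\times$ and $\mathfrak{s}'\in\langle S\rangle$, whence $(x/y^p)\in\langle S\rangle$, i.e.\ $x/y^p\in E_K^S$, so modulo $(K^\times)^p$ the class of $x$ comes from an $S$-unit; the reverse inclusion is immediate since any $S$-unit gives $[\a]=1$. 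For injectivity on the left: if $u\in E_K^S$ becomes a $p$-th power $u=z^p$ in $K^\times$, then $p\,v_\p(z)=v_\p(u)=0$ for all $\p\notin S$, so $v_\p(z)=0$ there, giving $z\in E_K^S$ and $u\in(E_K^S)^p$. Together these four verifications yield the claimed exact sequence.
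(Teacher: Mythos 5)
Your proposal is correct and follows essentially the same route as the paper: the paper defines exactly the same map $f(x)=[\a]$ where $(x)=\a^p\b$ with $\b\in\langle S\rangle$, and simply asserts surjectivity and that the kernel is $E_K^S/(E_K^S)^p$. You have merely written out in full the well-definedness and exactness checks that the paper leaves to the reader, and all of them are carried out correctly.
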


\begin{proof} The map $f:V_K^S/(K^\times)^p \rightarrow Cl_K^S[p]$ is defined as follows. Take $x\in V_K^S$: there exists $\a\in I_K$ and $\b \in \langle S\rangle$ such that $(x)=\a^p \b$. Then, $f(x)$ is the class of $\a$ in $Cl_K^S$. The map $f$ is surjective, and the kernel is exactly $E_K^S/(E_K^S)^p$. \end{proof}

\begin{defn} The governing field relatively to $K$ and $S$ is the number field  $Gov_{K}^S:=K'(\sqrt[p]{V_{K}^S})$. When $S=\emptyset$, we denote $Gov_K:=Gov_K^\emptyset$.
\end{defn}

 The extension  $Gov_{K}^S/K$ is Galois. Set  $M_K^S=Gal(Gov_K^S/K')$:  it is an elementary abelian $p$-group of $p$-rank which can be deduced  from Lemma \ref{lemmeVS}. Again, when $S=\emptyset$, we denote  $M_K:=M_K^\emptyset$.

 \smallskip

 For the groups $M_K^S$, we will use additive notation.

 \subsubsection{Governing field and idèles} \label{section_ideles}

Let $S=\{\l_1,\cdots, \l_s\}$ and $T=\{\q_1,\cdots, \q_t\}$ be two finite and disjoint sets of primes of $K$. We assume that the primes $\q_i$ in $T$ are tame.
Set

  \begin{itemize}
  \item[$\bullet$]  $\U_T^S:= U_\infty \prod_{\l \in S} K_\l^\times \prod_{\p \notin S\cup T} U_\p $, $\U^S:=\U_\emptyset^S$, $\U_T:=\U_T^\emptyset$,
  \item[$\bullet$] $V_{T}^S:=V_{K,T}^S=\{x \in K^\times \mid x\in U_\p, \ \forall \p \notin S\cup T; x\in (K_\q^\times)^p, \ \forall \q \in T\}$.
 \end{itemize}

One can express the sets $V_{T}^S$ in terms of idèles:
$$V_T^S=K^\times \cap \J^p \U_T^S\cdot$$ In particular, $V^S=K^\times \cap  \J^p \U^S$  and $V_T=K^\times \cap \J^p \U_T$.

 \subsubsection{Governing field and Frobenius} \label{section_governing_frobenius}

 Let us choose a tame prime $\q$ of $K$, not in $S$. Since $\# \O_K/\q \equiv 1 \ (mod \ p)$, the prime~$\q$   splits completely in  $K'/K$. Let $\qq$ be a prime ideal of   $K'$ above $\q$. Denote by  $\sigma_\qq$ the Frobenius of  $\qq$ in
$ Gal(Gov_K^S/K')$. For convenience, we will write  $\sigma_\q:=\sigma_\qq$ for some specific  $\qq|\q$. Note that if  $\qq'$ is another prime of  $K'$ above~$\q$, then  $\sigma_{\qq} = a  \sigma_{\qq'}$ for some  $a \in \F_p^\times$.   It follows that any property involving  $\sigma_\q$ will not depend on the choice of  $\qq|\q$.

Let $S$ and $T$ be as before.
Set $Gov_{K,T}^S:=K'(\sqrt[p]{V_{K,T}^S})$, and $M_{K,T}^S=Gal(Gov_{K,T}^S/K')$.

\begin{lem} \label{lemm_generate_frob}
 The Galois group $Gal(Gov_K^S/Gov_{K,T}^S)$ is generated by the Frobenius elements $\sigma_\q$ at $\q \in T$.
\end{lem}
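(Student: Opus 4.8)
\emph{Plan.} The idea is to translate everything into Kummer theory over $K'$ and to read off the Frobenius elements $\sigma_\q$ as the annihilators of the local $p$-th power conditions that cut out $V_T^S$ inside $V_K^S$. First I would record the inclusion and its refined form. From the idelic descriptions $V_K^S=K^\times\cap\J^p\U^S$ and $V_T^S=K^\times\cap\J^p\U_T^S$, together with $\U_T^S\subseteq\U^S$ (the two differ only in that $\U_T^S$ has trivial component at each $\q\in T$), one gets $V_T^S\subseteq V_K^S$ and, more precisely,
$$V_T^S=\{x\in V_K^S\mid x\in (K_\q^\times)^p\ \text{for all}\ \q\in T\}.$$
In particular $K'\subseteq Gov_{K,T}^S\subseteq Gov_K^S$. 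Write $\overline{V}$ (resp.\ $\overline{V}_T$) for the image of $V_K^S$ (resp.\ $V_T^S$) in $(K')^\times/((K')^\times)^p$. Kummer theory then furnishes a perfect pairing of finite $\F_p$-vector spaces
$$M_K^S\times\overline{V}\longrightarrow\mu_p,\qquad (\sigma,\overline{x})\longmapsto \sigma(\sqrt[p]{x})/\sqrt[p]{x},$$
under which the subextension $Gov_{K,T}^S=K'(\sqrt[p]{V_T^S})$ corresponds to $\overline{V}_T\subseteq\overline{V}$; hence $Gal(Gov_K^S/Gov_{K,T}^S)=\overline{V}_T^{\,\perp}$, the annihilator of $\overline{V}_T$.

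Next I would compute the annihilator of $H:=\langle\,\sigma_\q\mid\q\in T\,\rangle$. The crucial local input is the standard splitting criterion: for $\q\in T$ tame and $\qq\mid\q$ in $K'$ (which is unramified in $Gov_K^S$, since every $x\in V_K^S$ satisfies $v_\p(x)\equiv 0\ (\mathrm{mod}\ p)$ for $\p\notin S$ and $\q$ is prime to $p$), the restriction of $\sigma_\q=\sigma_\qq$ to $K'(\sqrt[p]{x})$ is the Frobenius of $\qq$, so
$$\sigma_\q(\sqrt[p]{x})=\sqrt[p]{x}\iff \qq\ \text{splits completely in}\ K'(\sqrt[p]{x})\iff x\in\big((K'_\qq)^\times\big)^p.$$
Because $\q$ splits in $K'/K$ we have $K'_\qq=K_\q$, so the right-hand condition is simply $x\in(K_\q^\times)^p$; note this is independent of the choice of $\qq\mid\q$, in accordance with $\sigma_\qq=a\,\sigma_{\qq'}$. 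Therefore
$$H^\perp=\{\overline{x}\in\overline{V}\mid x\in(K_\q^\times)^p\ \text{for all}\ \q\in T\}=\overline{V}_T,$$
the last equality being exactly the refined description of $V_T^S$ obtained above.

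Finally, since the Kummer pairing is perfect on the finite groups $M_K^S$ and $\overline{V}$, the double-annihilator identity $(H^\perp)^\perp=H$ gives
$$H=(\overline{V}_T)^{\perp}=Gal(Gov_K^S/Gov_{K,T}^S),$$
which is the assertion. I expect the only genuinely substantive point to be the equivalence ``$\sigma_\q$ fixes $\sqrt[p]{x}\iff x\in(K_\q^\times)^p$'': the implication $\Leftarrow$ is the easy half and merely shows $H\subseteq Gal(Gov_K^S/Gov_{K,T}^S)$, whereas $\Rightarrow$—that a trivial Frobenius at $\q$ forces $x$ to be a local $p$-th power there—is what makes $H^\perp$ no larger than $\overline{V}_T$ and thus forces equality. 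I would also be careful to justify once and for all that the tame primes $\q\notin S$ are unramified in $Gov_K^S$, so that the Frobenius $\sigma_\q$ is well defined and the criterion above applies.
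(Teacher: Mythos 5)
Your proof is correct and follows essentially the same route as the paper's: both hinge on the equivalence ``$\sigma_\q$ fixes $\sqrt[p]{x}$ if and only if $x\in(K_\q^\times)^p$'' (valid since $\q$ is tame, so $\zeta_p\in K_\q$ and $\q$ is unramified in $Gov_K^S/K'$), applied in both directions. The paper phrases the conclusion directly via the Galois correspondence (the fixed field of $\langle\sigma_\q\rangle_{\q\in T}$ is exactly $Gov_{K,T}^S$), whereas you make the same step explicit through the Kummer pairing and the double-annihilator identity; this is only a difference of presentation.
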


\begin{proof}
 First observe that primes $\q\in T$ are unramified in $Gov_K^S/K'$.

 Take  $x\in V_{K,T}^S$, and $\q\in T$. Then $x\in (K_\q^\times)^p$,  $(Gov_{K,T}^S)_\q=K_\q$, and consequently $Gov_{K,T}^S$ is fixed by $\sigma_\q \in M_K^S$.

 Reciprocally. Take $x \in V_K^S$ such that for all $\q\in T$, $\sigma_\q \in Gal(Gov_K^S/ K'(\sqrt[p]{x}))$. Then $x\in (K_\q^\times)^p$ (recall that $\zeta_p \in K_\q$). Hence, $x \in V_{K,T}^S$.
\end{proof}

\begin{rem}
 Observe that if a prime $\p$ of $K$, coprime to $p$ and $S$, has a non-trivial Frobenius in $M_K^S$ then $\p$ is tame.
\end{rem}

 \subsubsection{Governing field and $\Z/p$-extensions with prescribed ramification}

Let us state the following theorem due to Gras (see \cite[Chapter V, Corollary 2.4.2]{Gras}), which is not a priori useful in the proof of our result but which we will use in a remark in  \S \ref{section_remark_HS}.

Let $S$ and $T$ be as before.

\begin{thm}[Gras] \label{theorem_gras}  There exists a $\Z/p$-extension  $F/K$ that is exactly ramified at~$T=\{\q_1,\cdots, \q_t\}$ and totally decomposed at~$S$ if and only if, for $i = 1,...,t$,  there exists $a_{i}\in \F_p^\times$
such that:
$$
 \sum^{t}_{i = 1}a_i \sigma_{\q_i} = 0 \in M_K^S.
$$
\end{thm}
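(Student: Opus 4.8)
The plan is to read Gras's criterion as an instance of Hilbert reciprocity, through Kummer theory over $K'$. Since $[K':K]$ divides $p-1$, any $\Z/p$-extension $F/K$ satisfies $F\cap K'=K$ and $FK'=K'(\sqrt[p]{\delta})$ for some $\delta\in (K')^\times/((K')^\times)^p$; as $FK'/K$ embeds into $Gal(F/K)\times Gal(K'/K)$ and is therefore abelian, $\Delta:=Gal(K'/K)$ acts trivially on $Gal(FK'/K')$, which forces $\delta$ to lie in the $\omega$-eigenspace of $(K')^\times/((K')^\times)^p$, where $\omega$ is the mod-$p$ cyclotomic character. The local dictionary is then standard: for a prime $w$ of $K'$ with $w\nmid p$, the extension $FK'/K'$ is tamely ramified at $w$ exactly when $v_w(\delta)\not\equiv 0\ (p)$; for $w\mid p$ it is unramified exactly when $K'_w(\sqrt[p]{\delta})/K'_w$ is; and $F$ is totally split at $\l\in S$ exactly when $\delta\in((K'_w)^\times)^p$ for $w\mid\l$. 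Thus ``$F$ exactly ramified at $T$ and split at $S$'' amounts to a radical $\delta$ in the $\omega$-eigenspace with $v_{\qq_i}(\delta)\not\equiv 0$ for each $i$, with $v_w(\delta)\equiv 0$ at every other finite $w\nmid pS$, unramified at $p$, and a local $p$-th power along $S$.

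The engine is the product formula $\prod_w(x,\delta)_w=1$ for the degree-$p$ Hilbert symbol, applied to a test element $x\in V_K^S$ and the radical $\delta$. I would evaluate each factor. For $w\nmid pST\infty$ both $x$ and $\delta$ are units up to $p$-th powers, so the tame symbol is trivial. For $w\mid\l\in S$, splitting makes $\delta$ a local $p$-th power, hence $(x,\delta)_w=1$ — and it is precisely here that one needs the governing field built from $V_K^S$, which allows $x$ to have nonzero valuation at $S$. For $w\mid p$ the extension is unramified, so every unit of $K'_w$ is a norm from $K'_w(\sqrt[p]{\delta})$; since $p\notin S$ gives $v_w(x)\equiv 0$, the element $x$ is such a unit up to $p$-th powers and again $(x,\delta)_w=1$. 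Archimedean places contribute nothing for odd $p$ and are neutralized by the ``real stays real'' convention for $p=2$. The only surviving factors sit at $\qq_i\mid\q_i$, where the tame symbol formula gives $(x,\delta)_{\qq_i}=\left(\tfrac{x}{\qq_i}\right)^{v_{\qq_i}(\delta)}$, the power residue symbol $\left(\tfrac{x}{\qq_i}\right)$ being exactly the record of the action of $\sigma_{\q_i}\in M_K^S$ on $\sqrt[p]{x}$.

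Necessity now falls out. If $F$ is exactly ramified at $T$ and split at $S$, reciprocity yields $\prod_i\left(\tfrac{x}{\qq_i}\right)^{a_i}=1$ for all $x\in V_K^S$, where $a_i:=v_{\qq_i}(\delta)\bmod p$. By Kummer theory $M_K^S$ is dual to $V_K^S$ modulo $p$-th powers from $K'$, so this is precisely $\sum_i a_i\sigma_{\q_i}=0$ in $M_K^S$; and the $a_i$ lie in $\F_p^\times$ exactly because $F$ ramifies at each $\q_i$. More structurally, the same computation shows that the space $U\subseteq\F_p^t$ of ramification patterns $(v_{\qq_i}(\delta))_i$ arising from $S$-split radicals unramified outside $T$ is contained in the relation space $R:=\{(a_i):\sum_i a_i\sigma_{\q_i}=0\}$.

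The hard part is sufficiency, the reverse inclusion $R\subseteq U$: from a relation with all $a_i\in\F_p^\times$ one must produce a genuine radical $\delta$, hence an extension $F$, realizing the prescribed ramification pattern while being split at $S$ and unramified elsewhere. I would obtain $U=R$ either directly, via a global solvability statement (Grunwald--Wang / Poitou--Tate) guaranteeing that the relation $\sum_i a_i\sigma_{\q_i}=0$ is the only obstruction to prescribing these local conditions, or by a rank comparison: since $\dim_{\F_p}R=t-\dim_{\F_p}\langle\sigma_{\q_1},\dots,\sigma_{\q_t}\rangle$ and Lemma \ref{lemm_generate_frob} identifies $\langle\sigma_{\q_1},\dots,\sigma_{\q_t}\rangle$ with $Gal(Gov_K^S/Gov_{K,T}^S)$ (so $\dim\langle\sigma_{\q_i}\rangle=\dim M_K^S-\dim M_{K,T}^S$, with $\dim M_K^S$ governed by Lemma \ref{lemmeVS}), one matches $\dim R$ against the dimension of the ramification image $U$ coming from the $S$-split, $T$-ramified class field theory. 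Given $U=R$, a relation with every $a_i\neq 0$ is precisely a realized pattern ramified at all of $T$, producing the desired $F$ with exactly $t$ ramified primes and split at $S$, while the $\omega$-eigenspace condition descends the extension from $K'$ to $K$. The genuine obstacle is this existence step — turning a single linear relation among the $\sigma_{\q_i}$ into an actual global extension; the eigenspace descent and the $p=2$ archimedean convention are comparatively routine.
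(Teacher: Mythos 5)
The paper offers no proof of this statement to compare against: it is quoted directly from Gras's book (\emph{Class Field Theory}, Ch.~V, Cor.~2.4.2). Judged on its own terms, your argument for the \emph{necessity} direction is correct and is essentially the classical one: descend to $K'$, write $FK'=K'(\sqrt[p]{\delta})$ with $\delta$ in the $\omega$-eigenspace, and apply the product formula for the degree-$p$ Hilbert symbol to the pairs $(x,\delta)$ with $x\in V_K^S$. Your case-check of the local factors is right --- in particular the observation that total splitting at $S$ is exactly what kills the symbols there and hence permits the test group to be enlarged from $V_K$ to $V_K^S$ --- and the surviving tame symbols at the $\qq_i$ do compute $\sum_i v_{\qq_i}(\delta)\,\sigma_{\q_i}$, with $v_{\qq_i}(\delta)\in\F_p^\times$ precisely recording ramification at $\q_i$. (The $\F_p^\times$-ambiguities in the choices of $\qq_i\mid\q_i$ are harmless for a statement that only asserts the existence of nonzero $a_i$.)

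The genuine gap is the one you flag yourself: sufficiency. Reciprocity only gives the inclusion $U\subseteq R$ of the realizable ramification patterns into the relation space of the $\sigma_{\q_i}$; the whole content of the ``if'' direction is the reverse inclusion, i.e.\ manufacturing a global radical $\delta$ (hence $F$) from a single linear relation with all coefficients nonzero. You name two adequate tools (Poitou--Tate/Greenberg--Wiles duality, or a rank count against the $T$-ramified $S$-split ray class group) but carry out neither, so as written the proof establishes only the ``only if'' half. This is an incomplete step rather than a wrong one: the duality computation does close it, and in fact the machinery is already present in the paper in mirror-image form. Lemma \ref{lem_longue_es} and Proposition \ref{prop_longue_se} compare $\J/\J^pK^\times\U_T$ with $\J/\J^pK^\times\U_T^\Sigma$ to control the effect of imposing splitting; the same idèle computation, comparing the $T$-ramified $S$-split ray class group (units $U_\q$ at $\q\in T$ replaced by $U_\q^p$) with the unramified one, exhibits the obstruction to realizing a prescribed ramification vector $(a_\q)_{\q\in T}$ as the pairing against $V_K^S/V_{K,T}^S(K^\times)^p$, whose Kummer dual is $\langle\sigma_{\q_1},\dots,\sigma_{\q_t}\rangle\subset M_K^S$ by Lemma \ref{lemm_generate_frob}. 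That identification is exactly the missing $R\subseteq U$, and writing it out is where the substance of Theorem \ref{theorem_gras} lies.
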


Here ``exactly ramified'' at~$T$ means that $F/K$ is unramified outside  $T$ and every prime in   $T$ is ramified in $F/K$.
We will use the following corollary.

\begin{cor} \label{coro_gras} Assume $S=\{\l\}$ and let  $\q$ be a tame prime  of $K$, $\q \neq \l$,
such that $\sigma_\q \neq 0 $ in $M_K^S$. If there exists a   $\Z/p$-extension $F/K$ that is exactly ramified at  $\q$, then  $\l$ is inert in $F/K$.
\end{cor}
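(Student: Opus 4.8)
The plan is to argue by contradiction, converting the assertion about $\l$ into a totally-decomposed condition and then invoking Gras's criterion (Theorem~\ref{theorem_gras}). First I would record the elementary dichotomy. Since $F/K$ is a $\Z/p$-extension, i.e.\ cyclic of prime degree $p$, and since it is exactly ramified at $\q$ with $\q\neq\l$, the prime $\l$ is unramified in $F/K$. For an unramified prime in a cyclic extension of prime degree $p$ one has $e=1$ and $fg=p$, so either $f=1,\ g=p$ (the prime is totally decomposed) or $f=p,\ g=1$ (the prime is inert). Hence proving that $\l$ is inert in $F/K$ is equivalent to proving that $\l$ is \emph{not} totally decomposed in $F/K$.

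Next I would suppose, for contradiction, that $\l$ is totally decomposed in $F/K$. Then $F/K$ is a $\Z/p$-extension that is simultaneously exactly ramified at $T=\{\q\}$ and totally decomposed at $S=\{\l\}$. Applying Theorem~\ref{theorem_gras} with $t=1$, the existence of such an $F$ yields a scalar $a\in\F_p^\times$ with $a\,\sigma_\q=0$ in $M_K^S$. Since $M_K^S$ is an elementary abelian $p$-group, i.e.\ an $\F_p$-vector space written additively, and $a$ is invertible, this forces $\sigma_\q=0$ in $M_K^S$, contradicting the hypothesis $\sigma_\q\neq 0$. Therefore $\l$ cannot be totally decomposed, and by the dichotomy above it is inert in $F/K$.

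There is essentially no analytic or structural obstacle here: the corollary is a direct specialization of Gras's theorem to the case of a single ramified prime $\q$ and a single split prime $\l$, with the hypothesis $\sigma_\q\neq 0$ precisely blocking the only mechanism by which the splitting condition at $S$ could be realized. The one point requiring a little care is the direction in which Theorem~\ref{theorem_gras} is used: I invoke only the ``only if'' implication — the existence of the extension forces the linear relation $a\,\sigma_\q=0$ — rather than constructing anything, and I rely on the fact that ``not totally decomposed'' coincides with ``inert'' exactly because the degree $p$ is prime.
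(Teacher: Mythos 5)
Your proof is correct and follows exactly the intended route: the paper gives no explicit argument for this corollary, presenting it as an immediate consequence of Theorem~\ref{theorem_gras}, and your deduction (the $e=1$, $fg=p$ dichotomy plus the ``only if'' direction of Gras's criterion applied to $T=\{\q\}$, $S=\{\l\}$, using that $a\in\F_p^\times$ forces $\sigma_\q=0$) is precisely that consequence spelled out. Nothing is missing.
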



\subsection{Governing fields in the $p$-Hilbert tower}\label{section_treillis}

  Our main result relies on the choice of Frobenius elements in $M_{L_p(K)}$ ​ while considering the action of $G_K$​.
  To do this, we will need the following properties of linear disjunction.

\begin{lem}\label{prop_disj_lin} Let $L/K'$ be an unramified extension. Then  $$L\cap K'(\sqrt[p]{V_{K}^S})=L\cap K'(\sqrt[p]{V_{K}})$$ and $$ L(\sqrt[p]{V_L}) \cap L(\sqrt[p]{V_K^S})= L(\sqrt[p]{V_K}).$$
\end{lem}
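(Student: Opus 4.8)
The plan is to use Kummer theory over $K'$ (resp.\ over $L$, noting $\zeta_p\in K'\subseteq L$) to convert both field identities into statements about subgroups of $K'^\times/(K'^\times)^p$ (resp.\ of $L^\times/(L^\times)^p$), and then to control ramification at the primes of $S$ by a valuation computation. The only arithmetic inputs are: (a) from $x\in V_K^S$ one has $v_\p(x)\equiv 0\pmod p$ for every prime $\p\notin S$, by definition of $V_K^S$; and (b) the ramification index $e(\P/\p)$ of any prime $\P$ of $K'$ or of $L$ above a prime $\p$ of $K$ is prime to $p$, because $L/K'$ is unramified and $[K':K]$ divides $p-1$. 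The two ``$\supseteq$'' inclusions are immediate from $V_K\subseteq V_K^S$ and $V_K\subseteq V_L$, so all the content lies in the reverse inclusions.

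For the first identity, set $M:=L\cap Gov_K^S$. Since $M/K'$ sits inside the unramified extension $L/K'$, it is unramified, and being a subextension of $Gov_K^S/K'$ it has the form $M=K'(\sqrt[p]{B})$ for a subgroup $B$ of the image of $V_K^S$ in $K'^\times/(K'^\times)^p$. I would then fix $x\in V_K^S$ with $\sqrt[p]{x}\in M$ and invoke the standard fact that a Kummer extension $K'(\sqrt[p]{x})/K'$ is unramified at a prime $\mathfrak{q}$ only if $v_{\mathfrak{q}}(x)\equiv 0\pmod p$. Applying this at the primes $\mathfrak{q}$ of $K'$ above each $\l\in S$ and dividing by $e(\mathfrak{q}/\l)$, which is prime to $p$ by (b), gives $v_\l(x)\equiv 0\pmod p$; combined with (a) this yields $(x)\in(I_K)^p$, i.e.\ $x\in V_K$. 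Hence $B$ lies in the image of $V_K$, so $M\subseteq Gov_K$ and therefore $M\subseteq L\cap Gov_K$.

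For the second identity I would work with Kummer theory over $L$, so that intersection of fields corresponds to intersection of the associated subgroups of $L^\times/(L^\times)^p$; writing $\overline{V}$ for the image of a group $V$, the claim becomes $\overline{V_L}\cap\overline{V_K^S}=\overline{V_K}$. Given a class in the left-hand side, I would represent it by some $x\in V_K^S$ that also equals $yz^p$ with $y\in V_L$ and $z\in L^\times$. Since $v_\P(y)\equiv 0\pmod p$ for every prime $\P$ of $L$ (definition of $V_L$), the same congruence holds for $x$; specializing to the primes $\P$ above $\l\in S$ and dividing by the $p$-prime index $e(\P/\l)$ gives $v_\l(x)\equiv 0\pmod p$. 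Combined with (a), this forces $x\in V_K$, so the class lies in $\overline{V_K}$, proving ``$\subseteq$''.

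The main point to handle with care is the behaviour at primes above $p$: there the converse implication ``unit $\Rightarrow$ unramified'' fails, so I deliberately use only the direction ``unramified $\Rightarrow v_\P(x)\equiv 0\pmod p$'', which is valid at \emph{every} prime. The other delicate step is the descent of the congruence from $K'$ (resp.\ $L$) down to $K$, which is exactly where the hypothesis on $L/K'$ enters: I use that $L/K'$ is unramified and $[K':K]\mid p-1$ to guarantee every $e(\P/\l)$ is prime to $p$, and without this linear-disjointness input the descent would break down.
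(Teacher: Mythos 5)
Your proof is correct and follows essentially the same strategy as the paper's: both reduce each identity to showing that an element $x\in V_K^S$ giving rise to an unramified Kummer extension (resp.\ lying in $V_L\cdot(L^\times)^p$) already satisfies $(x)\in (I_K)^p$, hence lies in $V_K$. The only (immaterial) difference is the descent step from $K'$ or $L$ down to $K$: you divide the valuation congruences by ramification indices prime to $p$, whereas the paper takes the norm in $K'/K$ and uses $([K':K],p)=1$ (resp.\ observes that $\b\in\langle S\rangle$ must lie in $\I_K^p$ because $L/K'$ is unramified at $S$).
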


\begin{proof}
For the first point, obviously $L\cap K'(\sqrt[p]{V_{K}})\subset L\cap K'(\sqrt[p]{V_{K}^S})$.
Now, let $x \in V_{K}^S$​ such that $K'(\sqrt[p]{x})/K'$ is unramified. This implies that $x\O_{K'} \in \I_{K'}^p$​, and we will see that  $x\in V_K$​. Indeed, it is sufficient to take the norm in $K'/K$  and then observe that  $([K':K],p)=1$. Hence, the first point is proved.

Now let  $x \in V_{K}^S$ such that  $L(\sqrt[p]{x}) \subset L(\sqrt[p]{V_L})$. By Kummer theory,  there exists $z\in V_L$ such that $z x^{-1} \in (L^\times)^p$. Therefore, $x \O_L \in \I_L^p$.
On the other hand, in~${K}$, we have  $(x)=\a^p \b$, with $\b \in \langle S\rangle$.
Since the extension $L/K'$ is unramified at  $S$ (and $[K':K]$ is prime to $p$), we deduce that $\b \in \I_K^p$, which implies $x \in V_K$. Thus, we have $$ L(\sqrt[p]{V_L}) \cap L(\sqrt[p]{V_{K}^S})=L(\sqrt[p]{V_L})\cap L(\sqrt[p]{V_K})=L(\sqrt[p]{V_K}),$$ since $V_K \subset V_L$.
\end{proof}

Consider now  the following extensions.

$$
\xymatrix{&& F_2\ar@{-}[r] & F_5 \\
L_{p}(K) \ar@{-}[r] & L_p(K)(\zeta_p)\ar@{-}[r] \ar@{.}@/^1.5pc/[ur]^{M_{L_p(K)}}  & F_1 \ar@{-}[r] \ar@{-}[u]  & F_4  \ar@{-}[u] \\
& \ar@{-}[r]& F_0 \ar@{-}[u] \ar@{-}[r]&F_3\ar@{-}[u] \\
K\ar@{-}[uu]  \ar@{.}@/^1pc/[uu]^{G_K}  \ar@{-}[r]& K(\zeta_p) \ar@{-}[uu] \ar@{.}@/_1pc/[urr]_{M_K^S}  \ar@{.}@/_1pc/[ur]^{M_K} \ar@{.}@/^1pc/[uu]^{G_K}   &  &
}
$$

 where  $F_{0} := K^{\prime}(\sqrt[p]{V_K})=Gov_K$, $F_1:=  L_{p}(K) F_0$,
 $F_2 :=  L_{p}(K) \left(\zeta_p,\sqrt[p]{E_{L_{p}(K)}}\right)=Gov_{L_p(K)}$,   $F_3:= K^{\prime}\left(\sqrt[p]{V_{K}^S}\right)=Gov_K^S$, $F_4=F_3F_1=L_p(K)F_3$, and $F_5=F_3F_2$.

 Observe that $V_L=E_{L_p(K)}$.

 \smallskip

 Since  $L_p(K)(\zeta_p)/K'$  is unramified, by Lemma \ref{prop_disj_lin}, we have   $F_1 \cap F_3=  F_0$ and $F_2 \cap F_4=F_1$. Therefore, $F_2\cap F_3 = F_2 \cap F_4 \cap F_3= F_1 \cap F_3= F_0$.

 \smallskip

 In conclusion,  the  extensions $F_2/F_0$ and $F_3/F_0$ are linearly disjoint over $F_0$, and we thus have
 \begin{eqnarray}\label{iso_galois} Gal(F_5/F_{0}) = Gal(F_5/F_2)\times Gal(F_5/F_3) \simeq Gal(F_3/F_0) \times Gal(F_2/F_0).\end{eqnarray}

\subsection{Minkowski units} \label{section_MU}

\subsubsection{Structure} \label{section_MU_structure}
First, let us recall a well-known result.

\begin{propo} Let $G$ be a $p$-group, and let  $M$  be a finitely generated  $\F_p[G]$-module. Then every free submodule of  $M$ is a direct summand of $M$. In particular, there exists a well-defined integer  $\lambda \geq 0$ such that $M\simeq \F_p[G]^{\lambda}\oplus N$, where  $N$ is a torsion $\F_{p}[G]$-module. 
\end{propo}

\begin{proof}
This result follows from the fact that  $\F_p[G]$ is a Frobenius algebra (see \cite[\S 1, (3.15.E)]{La}, and from the   Krull-Schmidt theorem). See also \cite[\S 4]{O}.
\end{proof}

 We apply this result to the following context: $G_K=Gal(L_p(K)/K)$ is finite, and  $M=E_{L_p(K)}/(E_{L_p(K)})^p$.  We then denote $\lambda_K:=\lambda$.

\begin{defn}  A unit $\epsilon \in E_{L_p(K)}$  is called  a Minkowski unit (relative to $L_p(K)/K$) if its class in  $E_{L_p(K)}/(E_{L_p(K)})^p$ generates a free $\F_p[G_K]$-module
\end{defn}

\begin{rem} We actually use  the Kummer dual of $V_{L_p(K)}/(K^\times)^p $, which is itself equal to the Kummer dual of $E_{L_p(K)}/(E_{L_p(K)})^p$, as $p$ does not divide the order of the class group of  $L_p(K)$. Minkowski units of  $L_p(K)/K$ provide a free part of the same rank for this dual, {\it i.e.}, for the group  $M_{L_p(K)}$ viewed as an  $\F_p[G_K]$-module.
\end{rem}

\begin{rem} \label{rem_artin}  Suppose that  $\zeta_p \in K$.  Let  $\q$ be a tame prime of  $K$ that splits completely in  $L_{p}(K)/K$.
There are exactly  $\#G_K$ primes  $\qq_i$ of  $L_{p}(K)$ above $\q$, on which the group  $G_K$ acts transitively. Consider such a prime  $\qq$, and let  $\sigma_\qq $ be its  Frobenius element  in the Galois group  $M_{L_p(K)}$ of the governing field $ Gov_{L_p(K)}$. By the property of the Artin symbol,   $G_K$ acts transitively on  the $\sigma_\qq$.
When $\zeta_p \notin K$, $G_K$ acts transitively on the lines spanned by the
Frobenius elements  $\sigma_\qq \in M_{L_p(K)}$ (see also the observation made in \S \ref{section_governing_frobenius}).
\end{rem}


\subsubsection{Growth of the number of Minkowski units} \label{section_minoration}
We fix a number field $K$ such that $G_K$​ is finite. Let us begin with a definition:

 \begin{defn} Set  $$ A_{K} = \left\{
 \begin{array}{ll}
 r_{K,1} +  r_{K,2} - h^{2}_{G_K} + h^{1}_{G_K} - 1  \quad &\mbox{if}\quad \zeta _{p}\notin  K\\
 r_{K,1} +  r_{K,2} - h^{2}_{G_K} \quad & \mbox{if}\quad \zeta_{p} \in K.
 \end{array}
 \right.
 $$
 \end{defn}

 By \cite[\S 2, Fact 5]{HMR}, one has  $\lambda_K \geq A_K$.

 The presence of Minkowski units is central to our study, and the phenomenon of growth due to a base change  becomes highly significant. This was observed by Ozaki in \cite{O} and quantified in \cite{HMR} through the following proposition:

 \begin{propo}\label{MU1} Let $F/K$ be a  $\Z/p$-extension unramified at infinity and such that   $L_p(F)=FL_p(K)$.    Then, $A_F = A_K + (p-1)\left(r_{K,1} + r_{K,2}\right)$.
 \end{propo}

\begin{proof}
 See {\cite[Proposition 2.6]{HMR}}.
\end{proof}


\subsection{Stability of the $p$-Hilbert class field tower} \label{section_stabilite}

We begin with a number field~$K$ such that $G_K$​ is finite.

\smallskip

Let $\{g_1,..., g_d \}$ be a minimal system of generators of $G_K = Gal(L_{p}(K)/K)$, where $d =h^1_{G_K}$. The augmentation ideal $I_{G_K}$  of $\F_p[G]$ is generated as a $G$-module by the elements $x_{i} := g_i - 1$,  for $i = 1,...,d$.

\smallskip

Suppose  $\lambda_K \geq d$, and write $M_{L_p(K)}:=Gal(Gov_{L_p(K)}/L_p(K)(\zeta_p))\simeq  \F_p[G_K]^d \oplus M_0$.

In this notation, set  $$z = ((x_{1},...,x_d),0)\in M_{L_p(K)}.$$

Observe that $z\in I_{G_K} \left(M_{L_p(K)}\right)$.
In particular, $z \in Gal\left(Gov_{L_p(K)}/L_p(K)Gov_K\right)$. Indeed, $M_{L_p(K)}/I_{G_K} \left(M_{L_p(K)}\right)$ is the maximal extension of $L_p(K)$ on which $G_K$ acts trivially, and $G_K$  obviously acts trivially on $Gal(L_p(K)Gov_K/L_p(K))$.

\smallskip

The result concerning the stability of the $p$-tower is as follows.

\begin{thm} \label{theorem_stabilite} Suppose that $A_K \geq d$. Let $\q$ be a tame prime of  $K$ such that $\sigma_{\q} = z \in  M_{L_p(K)} \subset
Gal(Gov(L_p(K))/K)$. Then,
there exists a  $\Z/p$-extension $F/K$ exactly ramified at  $\q$ and such that $L_{p}(F) = F L_{p}(K)$.
Moreover $A_F > A_K$.
\end{thm}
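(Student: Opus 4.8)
The plan is to obtain $F$ from Gras's criterion, to prove the stability $L_p(F)=FL_p(K)$ by an ambiguous‑class‑number computation carried out over $L_p(K)$ (which has trivial $p$-class group), and then to read off $A_F>A_K$ from Proposition \ref{MU1}. First I would check the existence statement. As observed just before the theorem, the element $z$ lies in $Gal(Gov_{L_p(K)}/L_p(K)Gov_K)$; since $\sigma_\q=z$ by hypothesis, $\sigma_\q$ fixes $Gov_K$. As the Frobenius is compatible with restriction and the restriction $M_{L_p(K)}\to M_K$ carries $\sigma_\q$ to the Frobenius of $\q$ in $M_K$ (using the linear disjointness recorded in \S\ref{section_treillis}), we get $\sigma_\q=0$ in $M_K$. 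Gras's theorem (Theorem \ref{theorem_gras} with $S=\emptyset$, $T=\{\q\}$) then produces a $\Z/p$-extension $F/K$ exactly ramified at $\q$. Because $\q$ ramifies in $F$ but splits completely in $L_p(K)$, one has $F\cap L_p(K)=K$, so $FL_p(K)/F$ is unramified with group $G_K$; thus $FL_p(K)\subseteq L_p(F)$ and everything reduces to proving equality.

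To prove $L_p(F)=FL_p(K)$ it suffices to show that $Cl_{FL_p(K)}$ has trivial $p$-part, for then $FL_p(K)$ admits no nontrivial unramified $p$-extension and the above inclusion is an equality. Write $k:=L_p(K)$, $\tilde F:=FL_p(K)$ and $\Gamma:=Gal(\tilde F/k)\simeq\Z/p$. Since $\q$ splits completely in $k/K$ into $n:=\#G_K$ primes $\qq_1,\dots,\qq_n$, and all of them ramify (tamely, of degree $p$) in $\tilde F/k$ while $Cl_k$ has trivial $p$-part, Chevalley's ambiguous class number formula gives
\[
|Cl_{\tilde F}^{\Gamma}|_p=\frac{p^{\,n-1}}{[E_k:E_k\cap N_{\tilde F/k}\tilde F^{\times}]_p}.
\]
Because a nontrivial finite $p$-group carrying a $\Gamma$-action has nontrivial fixed points, $Cl_{\tilde F}$ has trivial $p$-part as soon as the right-hand side equals $1$, i.e. as soon as the unit norm index equals $p^{\,n-1}$.

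The crux is exactly this index computation, and this is where the hypotheses $\lambda_K\ge d$ and the precise shape of $z$ are indispensable. The group $G_K$ permutes $\qq_1,\dots,\qq_n$ simply transitively (Remark \ref{rem_artin}), so the local norm symbols identify $\prod_i k_{\qq_i}^{\times}/N\tilde F_{\qq_i}^{\times}$ with the regular module $\F_p[G_K]$, and global reciprocity forces the image of $E_k$ to lie in the augmentation hyperplane, i.e. in $I_{G_K}$ of dimension $n-1$. The hard part is to show this image is all of $I_{G_K}$. The symbol of a unit $u\in E_k$ at $\qq_i$ is, up to a fixed scalar in $\F_p^{\times}$, the Kummer pairing of $u$ against the Frobenius $\sigma_{\qq_i}\in M_{L_p(K)}$, and these Frobenius elements form the $G_K$-orbit of $\sigma_\q=z=((x_1,\dots,x_d),0)$ (Remark \ref{rem_artin}). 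Since $\lambda_K\ge d$ furnishes a free summand $\F_p[G_K]^d\subseteq E_{L_p(K)}/(E_{L_p(K)})^p$ Kummer-dual to the free part of $M_{L_p(K)}$, pairing these Minkowski units against the $G_K$-translates of $z$ yields precisely the $\F_p[G_K]$-submodule generated by $x_1,\dots,x_d$; as $x_i=g_i-1$, this submodule is the augmentation ideal $I_{G_K}$ itself. Hence the unit norm index is $p^{\,n-1}$, $Cl_{\tilde F}$ has trivial $p$-part, and $L_p(F)=FL_p(K)$.

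Finally, $F/K$ is a $\Z/p$-extension unramified at infinity with $L_p(F)=FL_p(K)$, so Proposition \ref{MU1} applies and gives $A_F=A_K+(p-1)(r_{K,1}+r_{K,2})$; as $r_{K,1}+r_{K,2}\ge 1$ and $p\ge 2$, this is strictly larger than $A_K$. I expect the main obstacle to be the third paragraph: matching the local norm symbols at the ramified primes with the governing-field Frobenius $\sigma_\q$, and checking that the chosen $z$ together with the Minkowski free module realizes the full augmentation ideal $I_{G_K}$ as the image of the unit norm map. The existence step and the growth estimate are, by contrast, essentially formal consequences of Theorems \ref{theorem_gras} and Proposition \ref{MU1}.
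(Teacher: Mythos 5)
The paper offers no proof of its own here—it simply cites \cite{HMR}, Theorem 1—and your argument is essentially a faithful reconstruction of that proof: existence of $F$ via Gras's criterion from the vanishing of $\sigma_\q$ in $M_K$ (which follows from $z\in I_{G_K}(M_{L_p(K)})$), stability via Chevalley's ambiguous class number formula over $L_p(K)$ with the unit-norm index made maximal by pairing the Minkowski units against the $G_K$-orbit of $z=((g_1-1,\dots,g_d-1),0)$ so that the image of $E_{L_p(K)}$ is the full augmentation ideal $I_{G_K}$, and the growth $A_F>A_K$ from Proposition \ref{MU1}. The approach and the key mechanism match the cited proof, and the one delicate point (identifying the local norm symbols at the primes above $\q$ with the Kummer pairing against the Frobenius translates of $z$) is correctly identified and correctly resolved.
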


\begin{proof} See \cite[Theorem 1]{HMR}.
\end{proof}


\section{Main result}

Let us recall the main result of our work (Theorem \ref{main_theorem}).

\begin{thm} \label{main_theorem_0}
Let $K$  be a number field with a finite $p$-Hilbert tower $L_p(K)/K$; set $G:=G_K =Gal(L_p(K)/K)$.
Assume that  $A_K\geq h^1_G$.

Let  $S$ be a finite set of primes of  $K$.

Then there exists a tamely ramified   extension $F/K$ of degree $p^m$ such that
\begin{itemize} \item[$(i)$]  $L_{p}(F) = L^S_{p}(F)$;
 \item[$(ii)$]  the Galois group $Gal(L^{S}_{p}(F)/F)$ is isomorphic to  $G$;
 \item[$(iii)$] the extension $F/K$ is ramified at  $m$ primes;
 \item[$(iv)$] $m\leq e_G$.
\end{itemize}
\end{thm}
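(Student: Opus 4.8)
The plan is to construct $F$ as the top of a tower $K=K_0\subset K_1\subset\cdots\subset K_m=F$ of $\Z/p$-extensions, each $K_j/K_{j-1}$ being exactly ramified at a single tame prime $\q_j\notin S$ and \emph{stable} in the sense that $L_p(K_j)=K_jL_p(K_{j-1})$. Stability propagates the Galois group: since $K_j\cap L_p(K_{j-1})=K_{j-1}$ (ramified versus unramified), restriction gives $Gal(L_p(K_j)/K_j)\cong Gal(L_p(K_{j-1})/K_{j-1})$, so in the end $L_p(F)=FL_p(K)$ with $F\cap L_p(K)=K$ and $Gal(L_p(F)/F)\cong G$. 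This yields $(ii)$ once $(i)$ is known. To obtain $(i)$, i.e. $L_p^S(F)=L_p(F)$, I must arrange that every prime of $F$ above $S$ splits completely in $L_p(F)/F$. Under the restriction isomorphism $Gal(L_p(F)/F)\xrightarrow{\sim}G$, the Frobenius of a prime $\mathfrak L\mid\l$ (with $\l\in S$) in $L_p(F)/F$ equals $\phi_\l^{\,f}$, where $\phi_\l=\mathrm{Frob}_\l(L_p(K)/K)\in G$ has order $p^{k_\l}$ and $f=f(\mathfrak L/\l)$ is the residue degree in $F/K$. Hence it suffices to make each $\l\in S$ totally inert in $F/K$, so that $f=p^m$; then $\phi_\l^{\,p^m}=1$ as soon as $m\ge\max_{\l\in S}k_\l$, and $\l$ splits completely in the tower.

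The heart of the argument is the single inductive step, where $\q_j$ must be chosen so that $K_j/K_{j-1}$ is simultaneously stable \emph{and} inert at every $\l\in S$. Stability is governed, via Theorem~\ref{theorem_stabilite}, by the condition $\sigma_{\q_j}=z$ in $M_{L_p(K_{j-1})}=Gal(Gov_{L_p(K_{j-1})}/L_p(K_{j-1})(\zeta_p))$, where $z=((x_1,\dots,x_d),0)$; this is legitimate because the inductive hypothesis together with Propositions~\ref{MU1} and~\ref{MU2} guarantees $\lambda_{K_{j-1}}\ge A_{K_{j-1}}\ge A_K\ge d=h^1_G$ (the quantity $A$ only grows along stable $\Z/p$-steps), so the required free summand of $M_{L_p(K_{j-1})}$ is present. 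Inertness of a given $\l$ in the extension cut out by $\q_j$ is governed instead by $Gov_{K_{j-1}}^{\{\l\}}$: by Corollary~\ref{coro_gras} it suffices that $\sigma_{\q_j}\ne0$ in $M_{K_{j-1}}^{\{\l\}}$. The key point is that these two families of conditions are \emph{independent}: the stability datum $z$ lives in the $Gov_{L_p(K_{j-1})}$-direction, the splitting data in the $Gov_{K_{j-1}}^{S}$-direction, and the linear disjointness recorded in~(\ref{iso_galois}), $Gal(F_5/F_0)\simeq Gal(F_3/F_0)\times Gal(F_2/F_0)$ with $F_2=Gov_{L_p(K_{j-1})}$ and $F_3=Gov_{K_{j-1}}^S$, lets me prescribe both projections of the Frobenius at once. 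Chebotarev's theorem applied to $F_5/K_{j-1}$ then produces infinitely many such $\q_j$; each is automatically tame and outside $S$ (by the remark after Lemma~\ref{lemm_generate_frob}, a prime with nontrivial image in the $S$-governing field is tame), and by construction $K_j/K_{j-1}$ is stable and inert at all of $S$.

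Iterating, after $j$ steps each $\l\in S$ has a unique prime above it in $K_j$, of residue degree $p^j$ over $\l$, so $\#S_F=\#S$. Choosing $m=\max_{\l\in S}k_\l$ gives $(iv)$, since $p^{k_\l}=\mathrm{ord}(\phi_\l)$ divides the exponent $p^{e_G}$ of $G$, whence $m\le e_G$; the $m$ distinct primes $\q_1,\dots,\q_m$ (which may be taken to lie over distinct primes of $K$) give $(iii)$ and $[F:K]=p^m$; and total inertness forces $\phi_\l^{\,p^m}=1$ for every $\l\in S$, which is exactly $(i)$, and therefore $(ii)$.

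I expect the main obstacle to be precisely this simultaneous control at the single step: decoupling the stability condition (an equality in $M_{L_p(K_{j-1})}$ needed to keep the entire Hilbert tower rigid) from the inertness conditions (non-vanishing in the several $M_{K_{j-1}}^{\{\l\}}$). Everything hinges on the linear disjointness of $Gov_{L_p(K_{j-1})}$ and $Gov_{K_{j-1}}^S$ over $Gov_{K_{j-1}}$ established in~\S\ref{section_treillis}, which itself rests on Lemma~\ref{prop_disj_lin}. The delicate bookkeeping is to verify that this disjointness, and the hypothesis $\lambda_{K_{j-1}}\ge d$, persist at every level $K_{j-1}$, and that the single prescribed Frobenius genuinely forces \emph{all} of $S$ inert while leaving the stability mechanism of Theorem~\ref{theorem_stabilite} intact.
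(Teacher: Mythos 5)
Your global strategy --- iterating $\Z/p$-steps that are simultaneously stable (via Theorem~\ref{theorem_stabilite}) and inert at $S$, prescribing the Frobenius in the compositum of governing fields through the linear disjointness of \S\ref{section_treillis} and Chebotarev, then killing the residue degrees at $S$ by the exponent bound --- is exactly the paper's. The gap is in the single step, precisely where you anticipated trouble: your inertness mechanism, ``$\sigma_{\q_j}\neq 0$ in $M_{K_{j-1}}^{\{\l\}}$ plus Corollary~\ref{coro_gras}'', is not always achievable. For a $\Z/p$-extension exactly ramified at $\q_j$ to exist at all (and because your stability target $z$ lies in $I_{G}(M_{L_p(K_{j-1})})$, hence restricts trivially to $Gov_{K_{j-1}}$), you are forced to have $\sigma_{\q_j}=0$ in $M_{K_{j-1}}$; so you need $M_{K_{j-1}}^{\{\l\}}\supsetneq M_{K_{j-1}}$ for every $\l$. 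By Lemma~\ref{lemmeVS} this happens exactly when the class of $\l$ lies in $Cl_{K_{j-1}}^p$, i.e.\ when $\l$ splits totally in the $p$-elementary Hilbert class field --- the condition $\H_S$ of \S\ref{section_remark_HS} --- and this can fail (e.g.\ $Cl_K\simeq \Z/p$ generated by the class of $\l$, in which case $Gov_K^{\{\l\}}=Gov_K$, your two requirements on $\sigma_\q$ are contradictory, and yet $\l$ is not split in $L_p(K)/K$ and must still be handled). Even when each $Gov^{\{\l\}}\supsetneq Gov$, you further need one element of $Gal(Gov^{S}/Gov)$ nonzero in all $s$ quotients $Gal(Gov^{\{\l\}}/Gov)$ at once; without the product decomposition of Proposition~\ref{prop_compositum} (which again requires $\H_S$) the $s$ index-$p$ kernels can cover the whole group when $s\geq p+1$.

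The paper's proof of Proposition~\ref{prop_stab_inerte} avoids Corollary~\ref{coro_gras} entirely: inertness at all of $S$ is encoded by the single element $a=(1,\dots,1)\in\bigoplus_{\l}H^1(G_\l^{ur})$, the correct target for $\sigma_\q$ is its image $\varphi(a)\in M_K^{\Sigma}$ in the exact sequence of Proposition~\ref{prop_longue_se} (Lemma~\ref{lemm_local_conditions}), and --- crucially --- when $a\in Im(\psi)$, so that $\varphi(a)=0$ and no ramified choice of $\q$ would realize these local conditions, an auxiliary prime $\l_0$ splitting totally in the relevant elementary abelian extension is adjoined to $\Sigma$ to force $a\notin Im(\psi)$. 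This collective (rather than per-$\l$) encoding, and the auxiliary-prime trick, are the two ingredients missing from your argument; what you wrote is essentially the simplified variant the paper records in \S\ref{section_remark_HS} under the extra hypothesis $\H_S$. The rest of your write-up (propagation of $A$ and $\lambda$ along stable steps via Propositions~\ref{MU1} and~\ref{MU2}, the Frobenius-power computation giving $m\leq e_G$, and $\#S_F=\#S$) matches the paper.
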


Observe that when $G=\{e\}$, the result is immediate. Suppose now $G$ to be nontrivial.
In this case, the condition $r_{K,1}+r_{2,K} \geq h^1_G+h^2_G$ from Theorem \ref{main_theorem} implies that $A_K \geq h^1_G$.

\subsection{Proof of Theorem \ref{main_theorem_0} } \label{proof}
Let $\Sigma=\{ \l_1,\cdots \}$ and $T=\{\q_1,\cdots \}$ be two finite and disjoint sets  of  primes of $K$. We assume that the primes $\q_i \in T$ are tame.
We use the notation from  \S \ref{section_ideles}.

\begin{lem}\label{lem_longue_es}
 We have the exact sequence:
 $$
  V_T/(K^\times)^p \hookrightarrow V_T^\Sigma/(K^\times)^p \longrightarrow \prod_{\l \in \Sigma} K_\l^\times/(K_\l^\times)^pU_\l \longrightarrow \J/ \J^pK^\times  \U_T  \twoheadrightarrow \J/\J^p K^\times \U_T^\Sigma\cdot$$
\end{lem}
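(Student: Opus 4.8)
The plan is to exhibit this four-term exact sequence as the outcome of a snake-lemma / standard idelic comparison between the two auxiliary groups $\U_T$ and $\U_T^\Sigma$, which differ precisely at the primes $\l\in\Sigma$. Recall from \S\ref{section_ideles} that $V_T=K^\times\cap\J^p\U_T$ and $V_T^\Sigma=K^\times\cap\J^p\U_T^\Sigma$, and that $\U_T^\Sigma$ is obtained from $\U_T$ by replacing the unit groups $U_\l$ at $\l\in\Sigma$ with the full local groups $K_\l^\times$. The key observation is that the local quotient at $\l$ is $K_\l^\times/U_\l\simeq\Z$, so modulo $p$-th powers and after absorbing $U_\l$, the relevant comparison group at each $\l\in\Sigma$ is $K_\l^\times/(K_\l^\times)^pU_\l$, which is cyclic of order $p$ (it records the parity mod $p$ of the $\l$-valuation). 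This is exactly the middle term of the sequence.

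First I would set up the tautological short exact sequence of $\F_p$-vector spaces (equivalently, $\Z_p\otimes(-)$ applied to finitely generated groups) comparing $\J/\J^pK^\times\U_T$ with $\J/\J^pK^\times\U_T^\Sigma$. Since $\U_T\subset\U_T^\Sigma$, there is a natural surjection $\J/\J^pK^\times\U_T\twoheadrightarrow\J/\J^pK^\times\U_T^\Sigma$, which is the last map of the displayed sequence. Its kernel is $\J^pK^\times\U_T^\Sigma/\J^pK^\times\U_T$, and I would identify this kernel, via the local components at $\l\in\Sigma$, with a quotient of $\prod_{\l\in\Sigma}K_\l^\times/(K_\l^\times)^pU_\l$; this identification gives the third arrow of the sequence as the map $\prod_{\l\in\Sigma}K_\l^\times/(K_\l^\times)^pU_\l\longrightarrow\J/\J^pK^\times\U_T$. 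The exactness at $\J/\J^pK^\times\U_T$ (image of the third map equals kernel of the fourth) is then immediate from this description.

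Next I would compute the kernel of the third map to obtain the left-hand portion. An element of $\prod_{\l\in\Sigma}K_\l^\times/(K_\l^\times)^pU_\l$ maps to zero in $\J/\J^pK^\times\U_T$ exactly when the corresponding idele, supported at $\Sigma$, lies in $\J^pK^\times\U_T$; pulling back to $K^\times$ and comparing with the definitions shows that such elements arise precisely from global elements of $V_T^\Sigma$, modulo those already coming from $V_T$. Concretely, the connecting map $V_T^\Sigma/(K^\times)^p\to\prod_{\l\in\Sigma}K_\l^\times/(K_\l^\times)^pU_\l$ sends a global $x$ (whose divisor is a $p$-th power times a product of primes in $\Sigma$) to its local valuations mod $p$ at the $\l\in\Sigma$, and its kernel is exactly $V_T/(K^\times)^p$ (the elements that are local $p$-th powers, up to units, at every $\l\in\Sigma$). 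This yields exactness at both $V_T^\Sigma/(K^\times)^p$ and at the product term, and the injectivity $V_T/(K^\times)^p\hookrightarrow V_T^\Sigma/(K^\times)^p$ is clear since $V_T\subset V_T^\Sigma$.

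The main obstacle I anticipate is bookkeeping the local contributions cleanly: one must be careful that each local factor is genuinely $K_\l^\times/(K_\l^\times)^pU_\l$ (and not some larger or smaller quotient) and that the maps glue to a genuine complex rather than merely a sequence of maps. In particular, establishing exactness at the product term — i.e. that the image of the global connecting map is precisely the kernel of the map into $\J/\J^pK^\times\U_T$ — is the delicate point, because it encodes a Hasse-type principle: a collection of local valuation data mod $p$ comes from a global $S$-relaxed element if and only if the associated idele becomes trivial globally mod $p$-th powers and $\U_T$. I would verify this by tracing the definitions through $V_T^\Sigma=K^\times\cap\J^p\U_T^\Sigma$ and checking both inclusions directly. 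The remaining steps are formal diagram chasing once the local identifications are fixed.
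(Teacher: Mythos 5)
Your proposal is correct and follows essentially the same route as the paper: both arguments unwind the idelic descriptions $V_T=K^\times\cap\J^p\U_T$ and $V_T^\Sigma=K^\times\cap\J^p\U_T^\Sigma$, define the middle map as projection of a global element to its classes in $K_\l^\times/(K_\l^\times)^pU_\l$ for $\l\in\Sigma$, and prove exactness at the product term by writing an idele $z\in\J^pK^\times\U_T$ supported at $\Sigma$ as $z=j^p x u$ and extracting the global $x\in V_T^\Sigma$. The ``snake lemma'' framing is only cosmetic; the substance of the verification coincides with the paper's proof.
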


\begin{proof}
Let us first describe $\alpha:  V_T^\Sigma/(K^\times)^p \rightarrow \prod_{\l \in \Sigma} K_\l^\times/(K_\l^\times)^pU_\l$.
 Take $x \in V_T^\Sigma$. Then $x\in \J^p \U_T^\Sigma$,  and $\alpha(x)$ is simply the projection to the $\Sigma$-coordinates. Thus, $ker(\alpha)= \J^p \U_T$ modulo $(K^\times)^p$ that is $V_T/(K^\times)^p$.

 The map $\beta : \prod_{\l \in \Sigma} K_\l^\times/(K_\l^\times)^p \rightarrow \J/ \J^pK^\times  \U_T$ is the inclusion followed by the restriction modulo $\J^pK^\times \U_T$.
 Obviously, $\beta \circ \alpha =0$, then $Im(\alpha) \subset ker(\beta)$. Let us study the reverse inclusion. Let $z:=(z_\l) \in \prod_{\l \in \Sigma} K_\l^\times $ be such that $z \in \J^pK^\times \U_T$ (in other words, $z\in ker(\beta)$).
 Then there exists $x\in K^\times$ such that $z=j^p \cdot x \cdot u$, where $j\in \J$ and $u\in \U_T$. Then $x \in K^\times \cap \J^p \U_T^\Sigma=V_T^\Sigma$, and $\alpha(x)=z$.

 The other maps are obvious.
\end{proof}


Given a prime $\p$ of $K$, let $K_\p^{ur}$ be  the maximal unramified extension of $K_\p$; set $G_\p^{ur}:=Gal(K_\p^{ur}/K_\p)$.

\smallskip

The exact sequence of Lemma \ref{lem_longue_es} allows us to obtain the following proposition:

\begin{propo}\label{prop_longue_se}
 One has the exact sequence $$ H^1(G_T^\Sigma) \hookrightarrow H^1(G_T) \longrightarrow \bigoplus_{\l \in \Sigma} H^1(G_\l^{ur}) \longrightarrow M_{K,T}^\Sigma \twoheadrightarrow M_{K,T} ,$$
 where the map $H^1(G_\l^{ur}) \longrightarrow M_{K,T}^\Sigma$ relies on the Artin map and Kummer duality.
\end{propo}

\begin{proof}
 By the Artin maps, $\J/\J^p K^\times \U_T^\Sigma \simeq G^\Sigma_T/\Phi(G^\Sigma_T)$, and $K_\l^\times/(K_\l^\times)^pU_\l \simeq G_\l^{ur}/(G_\l^{ur})^p$.
 Then the  exact sequence of Lemma \ref{lem_longue_es} becomes
$$
  V_T/(K^\times)^p \hookrightarrow V_T^\Sigma/(K^\times)^p \longrightarrow \prod_{\l \in \Sigma} G_\l^{ur}/(G_\l^{ur})^p \longrightarrow G_T/\Phi(G_T)  \twoheadrightarrow G_\Sigma/\Phi(G_T^\Sigma).
 $$
 By taking the dual $\wedge$ we get
 $$ H^1(G_T^\Sigma) \hookrightarrow H^1(G_T) \longrightarrow \bigoplus_{\l \in \Sigma} H^1(G_\l^{ur}) \longrightarrow \left(V_{K,T}^\Sigma/(K^\times)^p\right)^\wedge \twoheadrightarrow  \left(V_{K,T}/(K^\times)^p\right)^\wedge \cdot$$
  To conclude, observe that by Kummer duality $$\left(V_{K,T}^\Sigma/(K^\times)^p\right)^\wedge \simeq M_{K,T}^\Sigma \ {\rm and,} \ \left(V_{K,T}/(K^\times)^p\right)^\wedge \simeq M_{K,T}.$$
\end{proof}

Let $\q$ be a tame prime of $K$.
By applying  Proposition \ref{prop_longue_se} successively with  $T=\emptyset$ and $T=\{\q\}$, we get the following commutative diagram:
 $$\xymatrix{ H^1(G_K)\ar@{->}[d] \ar@{->}[r]^-{\psi} & \displaystyle{\bigoplus_{\l \in \Sigma} H^1(G_\l^{ur})}\ar@{->}[r]^-{\varphi}\ar@{=}[d]& M_K^\Sigma  \ar@{->>}[d]^{\phi_\q}\ar@{->>}[r]& M_K\ar@{->>}[d] \\
 H^1(G_{K,T})  \ar@{->}[r] \ar@{->}[r]^-{\psi'}&\displaystyle{ \bigoplus_{ \l \in \Sigma}H^1(G_\l^{ur}) }\ar@{->}[r]^-{\varphi'} & M_{K,T}^\Sigma  \ar@{->>}[r]& M_{K,T}
 }
 $$

 Recall that $\ker(\phi_\q)$ is generated by the Frobenius element $\sigma_\q \in M_K^\Sigma$ (see Lemma~\ref{lemm_generate_frob}).

\smallskip

  Let us start with some local conditions $a:=(a_\l)_{\l \in S} \in \displaystyle{ \bigoplus_{ \l \in \Sigma}H^1(G_\l^{ur}) }$. Then $$a \in Im(\psi') \Longleftrightarrow a\in ker(\varphi')\Longleftrightarrow \varphi(a) \in ker(\phi_\q)\Longleftrightarrow  \langle \sigma_\q \rangle = \langle \varphi(a) \rangle\cdot $$

  \begin{lem}\label{lemm_local_conditions} Suppose  $a \notin Im(\psi)$. Let   $\q$ be a tame prime of $K$, not in $\Sigma$, such that in $M_K^\Sigma$, $\langle \sigma_\q \rangle = \langle \varphi(a) \rangle$. Then
  there exists a $\Z/p$-extension $N/K$ exactly ramified at~$\q$ that respects the $a_\l$'s,  $\l \in \Sigma$. Moreover, the tame prime $\q$ is such that $\sigma_\q  \in M_K^\Sigma$ restricts to  $M_K$ is trivial.
\end{lem}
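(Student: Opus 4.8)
The plan is to read off everything from the commutative diagram built by applying Proposition \ref{prop_longue_se} with $T=\emptyset$ and $T=\{\q\}$. The hypotheses are that $a=(a_\l)_{\l\in\Sigma}\in\bigoplus_{\l\in\Sigma}H^1(G_\l^{ur})$ does \emph{not} lie in $\mathrm{Im}(\psi)$, and that $\q$ is a tame prime outside $\Sigma$ with $\langle\sigma_\q\rangle=\langle\varphi(a)\rangle$ in $M_K^\Sigma$. The displayed chain of equivalences just before the lemma already records that $\langle\sigma_\q\rangle=\langle\varphi(a)\rangle$ is equivalent to $a\in\mathrm{Im}(\psi')$, i.e.\ $a$ is realized by a global class for the ramification set $T=\{\q\}$. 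So the first step is to invoke those equivalences: since $\sigma_\q$ generates $\ker(\phi_\q)$ (by Lemma \ref{lemm_generate_frob}, noting $\ker(\phi_\q)$ is generated by the Frobenius at $\q$), the condition $\langle\sigma_\q\rangle=\langle\varphi(a)\rangle$ forces $\varphi(a)\in\ker(\phi_\q)$, hence $a\in\ker(\varphi')=\mathrm{Im}(\psi')$ by exactness of the bottom row.

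The second step is to translate the cohomological statement $a\in\mathrm{Im}(\psi')$ into the existence of an honest $\Z/p$-extension. An element of $H^1(G_{K,T})=\mathrm{Hom}(G_{K,T},\Z/p)$ corresponds to a cyclic degree-$p$ extension $N/K$ unramified outside $T=\{\q\}$ and split at $\Sigma$; choosing a preimage $f\in H^1(G_{K,T})$ with $\psi'(f)=a$ produces exactly such an $N$, and the condition $\psi'(f)=a$ says precisely that the local behavior of $N$ at each $\l\in\Sigma$ is prescribed by $a_\l$ — this is the sense in which $N$ ``respects the $a_\l$'s.'' The point where I must be careful — the main obstacle — is to show $N$ is \emph{exactly} ramified at $\q$, meaning genuinely ramified there and not unramified. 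This is where the hypothesis $a\notin\mathrm{Im}(\psi)$ enters: if $N$ were unramified at $\q$, then $f$ would factor through $G_K=G_{K,\emptyset}$, i.e.\ $f$ would lift to $H^1(G_K)$ in the top row, whence $a=\psi'(f)=\psi(\text{lift})\in\mathrm{Im}(\psi)$ by commutativity of the left square, contradicting $a\notin\mathrm{Im}(\psi)$. So the assumption $a\notin\mathrm{Im}(\psi)$ is exactly what rules out the unramified-at-$\q$ degenerate case and guarantees ramification at $\q$.

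It remains to verify the final assertion, that $\sigma_\q\in M_K^\Sigma$ restricts to $0$ in $M_K$. For this I would chase the right-hand square of the diagram: the horizontal surjection $M_K^\Sigma\twoheadrightarrow M_K$ is the restriction map, and I claim $\sigma_\q=\varphi(a)$ (up to the scalar ambiguity in $\F_p^\times$, which is harmless since we work with the line $\langle\sigma_\q\rangle$) lies in its kernel. Indeed $\varphi(a)$ is the image of $a\in\bigoplus_\l H^1(G_\l^{ur})$ under the map $\varphi$ in the top row, and by exactness of that row the composite $\bigoplus_\l H^1(G_\l^{ur})\xrightarrow{\varphi}M_K^\Sigma\twoheadrightarrow M_K$ is zero, being the connecting portion of a four-term exact sequence in which $M_K^\Sigma\to M_K$ follows $\varphi$. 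Hence $\varphi(a)$, and therefore $\sigma_\q$, maps to $0$ in $M_K$, as required. Concretely this records that while $\q$ may have nontrivial Frobenius relative to the split condition at $\Sigma$, it is split in the governing field $Gov_K$ itself, which is what keeps the prime $\q$ compatible with the unramified tower when one later passes to compositum. The only routine bookkeeping I would flag is keeping track of the $\F_p^\times$-scalar indeterminacy of $\sigma_\q$ throughout, which never affects any of the statements since they all concern the line $\langle\sigma_\q\rangle$ rather than $\sigma_\q$ itself.
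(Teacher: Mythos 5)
Your proof is correct and follows essentially the same route as the paper: read $a\in\mathrm{Im}(\psi')$ off the displayed equivalences, use $a\notin\mathrm{Im}(\psi)$ to rule out the unramified case, and get triviality of $\sigma_\q$ in $M_K$ from $\sigma_\q\in\mathrm{Im}(\varphi)$ and exactness of the top row. Only a wording slip to fix: an element of $H^1(G_{K,T})$ corresponds to a degree-$p$ extension unramified outside $T$ but \emph{not} a priori split at $\Sigma$ (that would be $H^1(G_{K,T}^\Sigma)$); as your next clause correctly states, it is the condition $\psi'(f)=a$ that prescribes the behaviour at each $\l\in\Sigma$.
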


\begin{proof}
 Let us choose a tame prime $\q \notin S$ such that  in $M_K^\Sigma$, $\langle \sigma_\q \rangle = \langle \varphi(a) \rangle$. Then there exists a $\Z/p$-extension $N/K$ unramified outside $\q$ that respects the $a_\l$, $\l \in \Sigma$. Since $a \notin Im(\psi)$, the extension $N/K$ is not unramified, and then $N/K$ is exactly ramified at~$\q$.  Moreover, the existence of such an extension implies that $\q$ splits totally in $M_K$ (see, for example, Theorem \ref{theorem_gras}, or observe that $\sigma_\q \in Im(\varphi)$).
\end{proof}

\begin{rem}
Observe now that a non-trivial $a_\l\in H^1(G_\l^{ur})$ indicates that $\l$ is inert in $N/K$.
\end{rem}

  We can prove the key proposition of our work.

\begin{propo} \label{prop_stab_inerte}
\label{prop_stab_inerte_bis} Suppose  $A_K \geq h_{G_K}^1$. Let $S=\{\l_1,\cdots \}$ be a finite set of primes of~$K$. Then there exists a $\Z/p$-extension  $N/K$  ramified at only one tame prime $\q$ such that:
\begin{itemize} \item[$(i)$] the extension $N/K$ is inert at all places of  $S$;
 \item[$(ii)$] there is stability of the $p$-tower, {\it i.e.},  $L_{p}(N) = NL_{p}(K)$.
\end{itemize}
\end{propo}

\begin{proof}
$\bullet$   For each $\l\in S$, take the non-trivial element $a_\l:=1 \in H^1(G_\l^{ur})$.

  Set $a'=(a_\l)_{\l \in S)} \in \bigoplus_{\l\in S} H^1(G_\l^{ur})$.


   $-$ If $a' \notin Im(\psi)$, set $a=a'$ and $\Sigma=S$.


   $-$ If $a' \in Im(\psi)$,   let us choose a prime $\l_0$ that splits totally in $(L_p^S)^{p,el}/K$, where $ (L_p^S)^{p,el}$ is the maximal elementary
abelian $p$-subextension
    of $L_p^S/K$.
   Set $\Sigma=S\cup\{\l_0\}$, and consider ${a=(1)_{\l \in \Sigma} \in \bigoplus_{\l\in \Sigma} H^1(G_\l^{ur})}$.
   By the choice of $\l_0$, $a\notin Im(\psi)$.

\medskip

$\bullet$ Let us consider the extensions of \S \ref{section_treillis} by replacing $S$ by $\Sigma$.

\smallskip

 Recall the isomorphism (\ref{iso_galois}): $$ Gal(F_5/F_{0}) = Gal(F_5/F_2)\times Gal(F_5/F_3),$$
 with $Gal(F_5/F_2)\simeq Gal(F_3/F_0)$.


 Let  $z_0 \in Gal(F_5/F_2) $ such that its projection onto $Gal(F_3/F_0)$ coincides with $a$.

 \smallskip

 Let $d=h^1_{G_K}$, and let $z =  ((x_{1},...,x_{d}),0)\in M_{L_p(K)}$ as in Theorem  \ref{theorem_stabilite}. In fact, $z\in I_{G_K}(M_{L_p(K)})$, which indicates that $z \in  Gal(F_2/F_1)$ (see \S \ref{section_stabilite}). We then choose $z_1 \in Gal(F_5/F_4)$ such that its projection onto $Gal(F_2/F_1)$ coincides with $z$.

 \smallskip

 By the Chebotarev density theorem, we now choose a tame prime $\q$ of $K$ not belonging in  $\Sigma$, such that  $$\sigma_\q = (z_0,z_1) \in Gal(F_5/F_2)\times Gal(F_5/F_4) \subset Gal(F_5/K)\cdot$$

\medskip

$\bullet$
 Let's look at the implications of this choice.


 First, by restriction  to $F_3$, $\sigma_\q=a \in Gal(F_3/F_0)  \subset M_K^\Sigma=Gal(F_3/K')$, which, by Lemma \ref{lemm_local_conditions} indicates the existence of a   $\Z/p$-extension $N/K$ ramified only at $\q$, such that every prime~$\l$ of $\Sigma$ is inert.


Then, $\sigma_\q $ restricted to $F_2$ coincides with  $z$, which, by Theorem  \ref{theorem_stabilite}, implies the stability of the $p$-tower, {\it i.e.},   $L_p(N)=NL_p(K)$. Hence, the result.
\end{proof}

We now have all the elements to prove the main result of our work.

First, by assumption $\lambda_K\geq h^1_G$:  this is a consequence of \S \ref{section_minoration}.

By Proposition \ref{prop_stab_inerte}, there exists a  $\Z/p$-extension   $N/K$  ramified at some prime $\q$,   inert at each prime  $\p \in S$, and such that $L_p(N)=NL_p(K)$.

Let $\l \in S$ such that $\l$ is not totally splitting in $L_p(K)/K$. Recall that $\l$ is inert in $N/K$. Noting that the decomposition group of  $\l$ in $Gal(L_p(N)/N)$ is cyclic (defined up to conjugacy),  it is then a small exercise to observe that the residual degree of $\l$ in  $L_p(N)/N$ strictly decreases:  it is exactly divisible by $p$.
Indeed, set $\Gamma:=Gal(L_p(N)/K)$, $H=Gal(L_p(N)/L_p(K)\simeq \Z/p$ and $G':=Gal(L_p(N)/N) \simeq G$. Then $\Gamma=H\times G'$. Take a prime $\Ell|\l$ of $L_p(N)$, and consider the Frobenius $\sigma_\Ell$ of $\Ell$ in $\Gamma$; write $\sigma_\Ell=(h,g) \in H\times G'$. Since  $\l$ is inert in  $N/K$, then $h\neq 1$. Observe that the decomposition group of $\l$ in $Gal(L_p(K)/K)$ is generated by $g:=(1,g)$.
But $\sigma_\Ell^p=(1,g^p) \subset G'$; hence the decomposition group of $\Ell$ in $L_p(N)/N$ is generated by $(1,g^p)$ which is of order $p^{f-1}$, where $p^f$ is the order of $g$.
This is therefore true for any such prime  $\l \in S$.

If $\l \in S$ splits totally in  $L_p(K)/K$, then it also splits totally in $L_p(N)/N$.

 By iterating this process  $m$ times, we obtain a $p$-extension $F/K$  such that every prime   $\l\in S$ splits totally  in  $L_{p}(F)/F$. Thus, we have  $L^{S}_{p}(F) = L_{p}(F)$.

 To conclude, we need to estimate  $p^m$.  A coarse upper bound is  $\# G_K$.
 We can  go a bit further by bounding $p^m$ by the exponent of  $G_K$.

 \begin{rem}
 Observe also that each place $\l\in S$ is ``inert'' in the successive steps. In particular, in Theorem \ref{main_theorem},  $\# S_F=\# S$.
 \end{rem}

 \subsection{Proof of Corollary \ref{coroB}}

Let us conclude with a word on the proof of Corollary~\ref{coroB}.
Let  $G$ be a  $p$-group and let $K$ be a number field  such that  $Cl_K=\{e\}$. By the main theorem of  \cite{HMR}, there exists an  extension $\K/K$ such that  $Gal(L_p(\K)/\K) \simeq G$. Furthermore, the proof of this result shows that for the field $\K$ in question, we have   $A_{\K} \geq h^1_G$, which implies $\lambda_{\K} \geq h^1_G$. We can then apply Theorem \ref{main_theorem}.



\section{Remarks}

\subsection{The condition $\H_S$} \label{section_remark_HS} Let  $S = \{\l_{1}, \l_{2},...,\l_{s}\}$ be a set of
primes of  $K$. We denote by $\H_S$ the following condition.

\medskip

$(\H_S)$:  Every prime  $\l\in S$ splits totally  in the elementary abelian extension   $(L_p)^{p,el}/K$ of $L_p(K)/K$.

\medskip

The condition $\H_S$ is therefore equivalent to the isomorphism between   $Cl_K/(Cl_K)^p $ and $ Cl_K^S/(Cl_K^S)^p$.

\smallskip

Observe that  the condition $\H_S$ is satisfied after a first application of Proposition \ref{prop_stab_inerte}. The goal here is to revisit the element $a$ and the choice of $\q$ in \S \ref{proof}.

\begin{lem} \label{lemmeHS}
 The  condition $\H_S$ is equivalent to  $\# Cl^{S}_K[p] = \# Cl_K[p]$.
 \end{lem}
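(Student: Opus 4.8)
The plan is to prove the equivalence by unwinding the definitions of the two quantities and relating them through the condition $\H_S$, which is stated as a splitting condition on the primes of $S$. First I would recall that $\H_S$ is, by the remark immediately preceding the lemma, equivalent to the natural surjection $Cl_K/(Cl_K)^p \twoheadrightarrow Cl_K^S/(Cl_K^S)^p$ being an isomorphism; indeed the extension $L_p^S/K$ is the subextension of $L_p(K)/K$ fixed by the decomposition groups at $S$, so $Cl_K^S/(Cl_K^S)^p$ corresponds by class field theory to the maximal elementary abelian unramified $p$-extension in which every $\l\in S$ splits totally, and $\H_S$ says exactly that this coincides with $(L_p)^{p,el}$. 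So the content to extract is that the $p$-rank of $Cl_K^S$ equals the $p$-rank of $Cl_K$ if and only if the orders of the $p$-torsion subgroups agree.

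The key observation is that both $Cl_K$ and $Cl_K^S$ are \emph{finite} abelian $p$-groups (we are working with the $p$-Sylow part throughout), and for a finite abelian $p$-group $A$ one has the elementary equalities $\#A[p] = \#A/A^p = p^{\mathrm{rk}\,A}$, where $\mathrm{rk}\,A=\dim_{\F_p}A/A^p$ is the $p$-rank. The natural map $Cl_K^S \twoheadrightarrow Cl_K$ (killing the classes of primes in $S$) is surjective, hence so is the induced map on $p$-ranks via the mod-$p$ quotients, giving $\mathrm{rk}\,Cl_K^S \geq \mathrm{rk}\,Cl_K$ always. Thus $\H_S$ (the isomorphism $Cl_K/(Cl_K)^p \simeq Cl_K^S/(Cl_K^S)^p$) holds if and only if $\mathrm{rk}\,Cl_K^S = \mathrm{rk}\,Cl_K$, which by the displayed identities is equivalent to $\#Cl_K^S[p] = \#Cl_K[p]$.

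Concretely, the steps I would carry out are: (1) restate $\H_S$ as the isomorphism of mod-$p$ quotients, citing the remark already in the text; (2) invoke the standard fact that for a finite abelian $p$-group $A$, $\#A[p]=\#(A/A^p)$, applied to both $A=Cl_K$ and $A=Cl_K^S$; (3) note the surjection $Cl_K^S \twoheadrightarrow Cl_K$ gives a surjection on mod-$p$ quotients, so equality of orders of the mod-$p$ quotients is equivalent to that surjection being an isomorphism; (4) chain these together to conclude $\H_S \iff \#Cl_K^S[p]=\#Cl_K[p]$.

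The only genuinely delicate point — the ``main obstacle,'' though it is minor here — is bookkeeping the direction of the maps and making sure one is comparing the \emph{same} invariant on both sides: the natural map goes $Cl_K^S \to Cl_K$ (forgetting the splitting condition at $S$ enlarges the class group, so $Cl_K^S$ maps onto $Cl_K$), and one must be careful that the finiteness of these $p$-groups is what legitimizes replacing ranks by orders of $p$-torsion. Once the surjectivity and finiteness are in hand, the equivalence is formal.
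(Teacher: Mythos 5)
Your argument is essentially the paper's: the remark preceding the lemma reduces $\H_S$ to the isomorphism of the mod-$p$ quotients, and the identity $\#A[p]=\#(A/A^p)$ for a finite abelian group $A$ converts equality of the orders of the $p$-torsion subgroups into equality of the orders of the mod-$p$ quotients, which, given the natural surjection between the two class groups, is equivalent to that isomorphism. One slip: the natural surjection goes $Cl_K \twoheadrightarrow Cl_K^S$ (by definition $Cl_K^S$ is the quotient of $Cl_K$ by the classes of the primes in $S$, corresponding to $K^{S,(1)}\subseteq K^{(1)}$), not $Cl_K^S\twoheadrightarrow Cl_K$ as you wrote, so the correct inequality is $\mathrm{rk}\,Cl_K\geq \mathrm{rk}\,Cl_K^S$. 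This reversal is harmless here, since a surjection of finite groups in either direction is an isomorphism exactly when the orders agree, so your chain of equivalences still closes.
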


 \begin{proof} This simply follows from the fact that for a finite abelian group~$A$ we have $\#A[p]=\# A/A^p$,   an equality derived from the exact sequence: $$1 \longrightarrow A[p] \longrightarrow A \stackrel{a\mapsto a^p}{\longrightarrow} A \longrightarrow A/A^p \longrightarrow 1.$$
\end{proof}

Lemma \ref{lemmeHS} allows us to prove the following lemma.

\begin{lem} \label{lemmeHSbis}
 Suppose $\H_S$. Then for every subset  $X \subset S$, we have the exact sequence: $$1 \longrightarrow V_{K}/(K^\times)^p  \longrightarrow   V_{K}^X/(K^\times)^p \longrightarrow (\Z/p)^{\# X} \longrightarrow 1 .$$
\end{lem}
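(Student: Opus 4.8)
The plan is to write down the two maps explicitly and observe that everything except surjectivity is formal, so that the hypothesis $\H_S$ is used exactly once, at the end. Since $V_K\subset V_K^X$, the left-hand arrow is the map induced by inclusion on the quotients by $(K^\times)^p$; it is injective because $(K^\times)^p\subset V_K$. For the right-hand arrow I would take the valuation-at-$X$ map
$$ v:V_K^X/(K^\times)^p\longrightarrow (\Z/p)^{\#X},\qquad x\longmapsto (v_\ell(x)\bmod p)_{\ell\in X}, $$
which is well defined and $\F_p$-linear since $v_\ell((K^\times)^p)\subset p\Z$. Exactness in the middle is then a direct verification: if $v(x)=0$, then $v_\ell(x)\in p\Z$ for $\ell\in X$, while for $\p\notin X$ this already holds because $x\in V_K^X$ forces $(x)\in (I_K)^p\langle X\rangle$; hence $(x)\in (I_K)^p$, i.e. $x\in V_K$. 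The reverse inclusion $V_K/(K^\times)^p\subset\ker v$ is clear.

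It remains to prove surjectivity of $v$, which is the heart of the matter. I would identify $\mathrm{Im}\,v$ with the kernel of the homomorphism
$$ \theta:(\Z/p)^{\#X}\longrightarrow Cl_K/(Cl_K)^p,\qquad e_\ell\longmapsto [\ell]\bmod (Cl_K)^p. $$
Indeed, for $x\in V_K^X$ one writes $(x)=\mathfrak{c}^p\prod_{\ell\in X}\ell^{k_\ell}$, so that $v(x)=(k_\ell\bmod p)_\ell$ and $\prod_\ell[\ell]^{k_\ell}=[(x)\mathfrak{c}^{-p}]=0$ in $Cl_K/(Cl_K)^p$; this gives $\mathrm{Im}\,v\subset\ker\theta$. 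Conversely, if $(n_\ell)\in\ker\theta$, then $\prod_{\ell\in X}\ell^{n_\ell}=(y)\mathfrak{d}^p$ for some $y\in K^\times$ and some ideal $\mathfrak{d}$, whence $y\in V_K^X$ with $v(y)=(n_\ell)$. Thus $\mathrm{coker}\,v\cong\mathrm{Im}\,\theta$, the subgroup of $Cl_K/(Cl_K)^p$ generated by the classes $[\ell]$, $\ell\in X$.

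Finally I would invoke $\H_S$ through its class field theoretic meaning: $(L_p)^{p,el}/K$ is the extension cut out by $Cl_K/(Cl_K)^p$, so a prime $\ell$ splits totally in it precisely when its Artin symbol $[\ell]$ lies in $(Cl_K)^p$, that is, when $\theta(e_\ell)=0$. Since $X\subset S$ and every $\ell\in S$ splits totally in $(L_p)^{p,el}/K$ by hypothesis, $\theta\equiv 0$, hence $\ker\theta=(\Z/p)^{\#X}$ and $v$ is onto. The main obstacle is precisely this last identification — pinning down $\mathrm{coker}\,v$ as $\mathrm{Im}\,\theta$ and reading $\H_S$ as the vanishing of $\theta$ — since the first two maps are essentially forced. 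As a cross-check, surjectivity can also be obtained by a dimension count: Lemma \ref{lemmeVS} together with Dirichlet's theorem on $X$-units gives $\dim_{\F_p}\mathrm{Im}\,v=\#X+\dim Cl_K^X[p]-\dim Cl_K[p]$, and $\H_S$ combined with the surjections $Cl_K\twoheadrightarrow Cl_K^X\twoheadrightarrow Cl_K^S$ and Lemma \ref{lemmeHS} forces $\dim Cl_K^X[p]=\dim Cl_K[p]$.
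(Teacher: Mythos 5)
Your proof is correct, but it takes a genuinely different route from the paper's. The paper stacks the two exact sequences of Lemma \ref{lemmeVS} (for $\emptyset$ and for $X$) into a commutative diagram with injective vertical maps $\alpha,\beta,\gamma$, applies the snake lemma, and finishes by a cardinality count: under $\H_S$, Lemma \ref{lemmeHS} gives $\#\ker\gamma=\#\mathrm{coker}\,\gamma$, hence $\#\mathrm{coker}\,\beta=\#\mathrm{coker}\,\alpha=p^{\#X}$ by Dirichlet's theorem on $X$-units. You instead exhibit the quotient map explicitly as the valuation-at-$X$ map $v$, verify exactness in the middle by hand, and identify $\mathrm{coker}\,v$ with the subgroup of $Cl_K/(Cl_K)^p$ generated by the classes $[\ell]$, $\ell\in X$; the hypothesis $\H_S$ then enters through its Artin-symbol reading, namely that $[\ell]\in(Cl_K)^p$ for every $\ell\in S$. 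All the individual verifications (well-definedness of $v$, $\ker v=V_K/(K^\times)^p$, $\mathrm{Im}\,v=\ker\theta$) are sound. Your version makes the exact point where $\H_S$ is used more transparent and produces a canonical realization of the quotient $(\Z/p)^{\#X}$, at the cost of redoing by hand the bookkeeping that Lemma \ref{lemmeVS} plus the snake lemma package for free; your closing dimension count is essentially the paper's argument in disguise (note it implicitly applies Lemma \ref{lemmeHS} to $X$ rather than $S$, which is legitimate since $X\subset S$ and splitting in $(L_p)^{p,el}/K$ is a condition prime by prime). One pedantic remark: since the paper's $Cl_K$ denotes the $p$-Sylow subgroup of the class group, $[\ell]$ should be read as the image of the class of $\ell$ in $Cl_K/(Cl_K)^p\cong Cl/Cl^p$, which is exactly what total splitting in $(L_p)^{p,el}/K$ detects, so no harm is done.
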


\begin{proof} Let's start with the following commutative diagram:

 \begin{equation}
\xymatrix{
 1 \ar[r] & E_{K}/ E^{p}_{K} \ar@{^{(}->}[d]^{\alpha} \ar[r] & V_{K}/K^{\times p}\ar[r] \ar@{^{(}->}[d]^{\beta} & \ar[r] Cl_{K}[p]\ar[d]^{\gamma} & 1\\
1 \ar[r] & E_K^{X}/ (E_K^X)^p \ar[r] & V_{K}^X/K^{\times p}\ar[r] & \ar[r] Cl^{X}_{K}[p]& 1
}
\end{equation}
  Under the condition $\H_S$, by Lemma \ref{lemmeHS}, we known that   $ker(\gamma)$ and  $coker(\gamma)$ have the same order. This implies that  $coker(\alpha)$ and $coker(\beta)$ also have the same order by the Snake Lemma. Now, since  $coker(\alpha) \simeq (\Z/p)^{\# X}$ by Dirichlet's theorem, we obtain the result.\end{proof}

 \begin{lem}\label{lemm_compositum} Suppose   $\H_S$. Let $X=\{\l_{i_1},\cdots, \l_{i_x}\}\subset S$ be a subset of  $x$ primes of  $S$. Then $$K'(\sqrt[p]{V_{K}^X})= K'\left(\sqrt[p]{V_K^{\{\l_{i_1}\}}}, \sqrt[p]{V_K^{\{\l_{i_2}\}}}, \cdots ,\sqrt[p]{V_K^{\{\l_{i_x}\}}}\right),$$
and the Galois group of  $K'(\sqrt[p]{V_K^X})/K'(\sqrt[p]{V_K})$ is isomorphic to $(\Z/p)^{x}$.
 \end{lem}

 \begin{proof}
For $\# X=1$, this is the Lemma \ref{lemmeHSbis}.


Suppose $X=\{\l_1,\l_2\} \subset S$. Obviously $V_K^{\{\l_1\}} V_K^{\{\l_2\}}  \subset V_{K}^X$, and $V_K^{\{\l_1\}} \cap V_K^{\{\l_2\}}({K'}^\times)^p=V_K $. By Lemma \ref{lemmeHSbis}, $Gal(K'(\sqrt[p]{V_K^{\{\l_i\}}})/K'(\sqrt[p]{V_K}))\simeq \Z/p$, and
$Gal(K'(\sqrt[p]{V_K^{X}}/K'(\sqrt[p]{V_K})\simeq (\Z/p)^2$, which proves the result.

\smallskip

Continue the process.
  \end{proof}

  We then obtain the following proposition.

\begin{propo}\label{prop_compositum} Suppose $\H_S$.  Then $$Gal\left(K'(\sqrt[p]{V_{K}^S})/K'(\sqrt[p]{V_K})\right) \simeq \prod_{i=1}^s Gal \left (K'(\sqrt[p]{V_K^{\{\l_i\}}})/K'(\sqrt[p]{V_K})\right) \simeq(\Z/p)^s\cdot$$
\end{propo}

\begin{proof} It is an immediate consequence of Lemma \ref{lemm_compositum}.
\end{proof}

We arrive at the following remark. Recall the isomorphism (\ref{iso_galois}): $ Gal(F_5/F_{0}) \simeq Gal(F_5/F_2)\times Gal(F_5/F_3)$ used in the proof of our main result (see \S \ref{proof}).
We have choosen a tame prime $\q$ of $K$ and an element $z_0 \in Gal(F_5/F_0)$ such that $\sigma_\q= z_0$ in $Gal(F_3/K')$.

Under $\H_S$, the element $z_0$ can be chosen as $(1,1,...,1) \in Gal(F_5/F_2) $ according to the isomorphism:
$$Gal(F_5/F_2)\simeq Gal(F_3/F_0) \simeq \prod_{i=1}^s Gal\left(K'(\sqrt[p]{V_K^{\{\l_i\}}})/K'(\sqrt[p]{V_K})\right) \simeq(\Z/p)^s\cdot$$

 By the Chebotarev density theorem,  choose a tame prime $\q$ of $K$ not belonging in  $S$, such that  $\sigma_\q = z_0 \in Gal(F_5/F_2)$.
 Since   $ \sigma_\q = 0$ in $M_K$, by Theorem \ref{theorem_gras}, there exists a    $\Z/p$-extension $N/K$ ramified only at $\q$.

Let $\l \in S$. The restriction of $\sigma_\q$ to $Gal\left(K'(\sqrt[p]{V_K^{\{\l\}}})/K'(\sqrt[p]{V_K})\right)$ is non-trivial by the choice of $z_0$, which, by Corollary\ref{coro_gras}, implies that   $\l$ is inert in $N/K$.


\subsection{On the degree}

Let $G$ be a pro-$p$ group. Let $d=h_G^1$ and $r=h_G^2$. Recall the Golod-Shafarevich criterion   (see \cite{GS}, \cite[\S 7.7]{Koch} or \cite{R}):  If $G$ is finite, then  $r> d^2/4$.

\smallskip

On the other hand, when $G=Gal(L_p^S(K)/K)$, according to Shafarevich and Koch, we know that $0\leq r -d \leq r_{K,1}+r_{K,2}+\# S$ (see for example \cite[Chapter X, Theorem 10.7.12]{NSW}).
Therefore, if  $L_p^S(K)/K$ is finite then $r_{K,1}+r_{K,2}+ \# S> d^2/4-d$.
Thus, when $\#S $ is bounded, the degree $[K:\Q]$ grows according to the $p$-rank of $G$.

\subsection{On  $T$-ramified and $S$-split $p$-Hilbert ray class field towers}
Let $p$  be a prime number. Let  $F$ be a number field, and
let $S$ and $T$ be two finite and disjoint sets of primes of $F$. We assume that the primes in $T$ are tame.
Let $L_{p,T}^S(F)$ be the pro-$p$ extension of $F$, unramified outside  $T$,  totally splitting at $S$, and maximal for these properties.
Set $G_{F,T}^S:=Gal(F_T^S/F)$.

Recall that if $\#T$ is large compared to the degree of  $F/\Q$ and  $\# S$ is fixed, then $G_{F,T}^S$ is infinite. This can be seen, for example, through genus theory  (see the main theorem of \cite{M0}) associated with the  Golod-Shafarevich theorem. See also, for example, \cite{M}.
Here, we make the following observation.

 \begin{thm}  Let $K$ be a number field such that $Cl_K=1$, and let $S$ be a finite set of primes of $K$. Let $G$ be  a $p$-group and let $n\geq 0$.
 Then there exists an extension $F/K$ and a set  $T$ of tame primes of  $F$ such that:
  \begin{itemize}
   \item[$(i)$] $\# T  = n$,
   \item[$(ii)$] $G_{F,T}^S\simeq G$.
  \end{itemize}
 \end{thm}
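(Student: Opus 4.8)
The plan is to reduce the $T$-ramified, $S$-split statement to the unramified, $S$-split case already established in Theorem \ref{main_theorem_0} (equivalently Corollary \ref{coroB}). First I would apply Corollary \ref{coroB} to the field $K$ (which has $Cl_K=1$), the set $S$, and the given $p$-group $G$: this produces an extension $F_0/K$, tamely ramified and unramified at infinity, such that $Gal(L^S_p(F_0)/F_0)\simeq G$, i.e. $G^S_{F_0,\emptyset}\simeq G$. The task is then to enlarge the picture by adjoining a set $T$ of exactly $n$ tame primes without changing the Galois group $G^S_{F_0,T}$. The key point is that allowing ramification at a tame prime $\q$ can only increase $G^S_{\cdot,T}$ by a quotient of the inertia, so I want to choose the primes in $T$ so that their contribution is forced to be trivial.

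The concrete mechanism I would use is the governing-field and Frobenius machinery of \S\ref{section_governing_frobenius} and Proposition \ref{prop_longue_se}, applied over $F:=F_0$ (for which $L_p(F)=L^S_p(F)$). By Proposition \ref{prop_longue_se} with $\Sigma=S$, the maximal $T$-ramified $S$-split elementary abelian extension differs from the unramified $S$-split one only through the images of the local classes $H^1(G^{ur}_\q)$ for $\q\in T$ inside $M^S_{F,T}$; the relevant obstruction is measured by the Frobenius elements $\sigma_\q\in M^S_F$. If I choose each $\q\in T$ so that $\sigma_\q=0$ in $M^S_F$, then by Lemma \ref{lemm_generate_frob} the extension $Gov^S_{F,T}$ coincides with $Gov^S_F$, so no new $S$-split abelian extension is created at the first Frattini layer; equivalently, by Theorem \ref{theorem_gras} (with $S$ as the split set), no new $\Z/p$-extension ramified at $T$ and split at $S$ appears. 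The existence of infinitely many such tame $\q$ with $\sigma_\q=0$ is guaranteed by the Chebotarev density theorem applied to $Gov^S_F/F'$, so I can pick $n$ distinct such primes and set $T=\{\q_1,\dots,\q_n\}$, giving $(i)$.

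It remains to upgrade the Frattini-level statement to the full pro-$p$ group, i.e. to show $G^S_{F,T}=G^S_{F,\emptyset}=G$ and not merely that their abelianized-mod-$p$ quotients agree. Here I would argue that the inclusion $L^S_p(F)\subseteq L^S_{p,T}(F)$ is an equality: any prime $\q\in T$ that is ramified in $L^S_{p,T}(F)/F$ would contribute to some elementary abelian subquotient, hence would give a nontrivial $\sigma_\q$ in the appropriate governing field at that level, contradicting our choice; since $F$ is fixed and the extensions are finite $p$-groups, it suffices to control the bottom layer and propagate by the Frattini argument (a surjection of pro-$p$ groups inducing an isomorphism on $G/\Phi(G)$ is an isomorphism). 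I expect the main obstacle to be exactly this passage from the elementary abelian obstruction to the whole tower: one must verify that choosing $\sigma_\q=0$ in the governing field of the \emph{top} $L^S_p(F)$ (rather than just of $F'$) suffices to keep the tower stable under adjoining $T$-ramification, and that the transitivity/linear-disjointness arguments of \S\ref{section_treillis} adapt so that the added local freedom at $T$ is absorbed with no new relations. Once this stability is in place, $G^S_{F,T}\simeq G^S_{F,\emptyset}\simeq G$ gives $(ii)$.
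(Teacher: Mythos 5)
There is a genuine gap, and in fact the central choice in your construction is exactly backwards. Theorem \ref{theorem_gras} (applied with $T=\{\q\}$, so $t=1$) says that a $\Z/p$-extension of $F$ exactly ramified at $\q$ and totally split at $S$ exists \emph{if and only if} $\sigma_\q=0$ in $M_F^S$. So choosing the primes of $T$ with $\sigma_\q=0$ does not prevent new $S$-split ramified extensions --- it manufactures them, and $G_{F,T}^S$ becomes strictly larger than $G$ already at the first Frattini layer. (Test case: $K=\Q$, $p$ odd, $S=\emptyset$; then $M_\Q=0$, every tame $q\equiv 1\ (p)$ has $\sigma_q=0$, and indeed $\Q(\zeta_q)$ contains a $\Z/p$-extension ramified at $q$.) The equality $Gov_{F,T}^S=Gov_F^S$ from Lemma \ref{lemm_generate_frob} does not mean ``no new extension''; the existence of ramified extensions is controlled by the \emph{vanishing} of combinations of Frobenius elements, so you want the $\sigma_\q$ to be as independent as possible, not trivial.

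The second problem is the passage from the Frattini layer to the whole tower, which you correctly flag but do not resolve. A surjection of pro-$p$ groups inducing an isomorphism on Frattini quotients need not be an isomorphism ($\Z_p\twoheadrightarrow\Z/p$), so controlling the bottom layer over $F$ is not enough: ramification at $T$ could appear higher up. The paper's proof works at the \emph{top} of the tower instead. It first uses the stability theorem \ref{theorem_stabilite} (together with Propositions \ref{MU1} and \ref{MU2}) to replace $F_0$ by an $F$ with $L_p^S(F)=L_p(F)$, $Gal(L_p(F)/F)\simeq G$ \emph{and} $\lambda_F\geq n$, so that $M_{L_p(F)}\simeq \F_p[G]^n\oplus M_0$. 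It then chooses, by Chebotarev, tame primes $\q_1,\dots,\q_n$ of $F$ whose Frobenius elements are the standard free generators $x_i$ of $\F_p[G]^n$ inside $M_{L_p(F)}$. By Remark \ref{rem_artin} the Frobenius elements of all primes of $L_p(F)$ above the $\q_i$ are the $G$-conjugates of the $x_i$, which are $\F_p$-linearly independent; Gras's theorem over $L_p(F)$ then shows no $\Z/p$-extension of $L_p(F)$ is exactly ramified at any nonempty subfamily of the primes above $T$, and since $p\nmid \#Cl_{L_p(F)}$ there is no unramified one either, whence $L_{p,T}^S(F)=L_p(F)$ and $G_{F,T}^S\simeq G$. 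Both the Minkowski-unit boost $\lambda_F\geq n$ and the top-of-tower freeness argument are missing from your proposal, and the first-layer condition you impose would already destroy the conclusion.
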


 As before, we will abusively denote by $S:=S_F$ the set of primes of $F$ above those in $S$.

\begin{proof}
We begin by applying Corollary \ref{coroB}: there exists an extension  $\FF/K$ such that  $L_p^S(\FF)=L_p(\FF)$ and  $Gal(L_p(\FF)/\FF)\simeq G$. We then note that the proof guarantees enough Minkowski units, {\it i.e.}, $\lambda_{\FF} \geq h_G^1$. By 
\S \ref{section_minoration}, 
it  is then possible to use the stability theorem~\ref{theorem_stabilite} to obtain an extension  $F/\FF$ such that $L_p^S(F)=L_p(F)$, $Gal(L_p(F)/F)\simeq G$, and $\lambda_F \geq n$.

Thus $M_{L_p(F)}= \F_p[G]^n \oplus M_0$. For $i=1,\cdots, n$, define $x_i=((0,\cdots,0, 1,0,\cdots 0), 0) \in M_{L_p(F)}$. As an   $\F_p[G]$-module, the elements $x_i$ form a basis of a free subspace of dimension~$n$. For  $i=1,\cdots, n$, by the Chebotarev density theorem, choose a tame prime~$\q_i$ of~$F$ such that  $\sigma_{\q_i}=x_i \in M_{L_p(F)}$.

By the theorem of Gras \ref{theorem_gras}, there is no extension of  $L_p(F)$ that is exactly ramified at any non-trivial subfamily of   $T=\{\q_{1},\cdots, \q_n\}$  (see also Remark \ref{rem_artin}).
Since  $p\nmid \# Cl_{L_p(F)}$, we conclude that $L_p(F)=L_{p,T}^S(F)$. Hence, the result follows.
\end{proof}



\end{document}